\newcommand{\NN}{\mathbb{N}}
\newcommand{\ZZ}{\mathbb{Z}}
\newcommand{\RR}{\mathbb{R}}
\newcommand{\CC}{\mathbb{C}}
\newcommand{\HH}{\mathbb{H}}
\newcommand{\kk}{\mathfrak{k}}
\renewcommand{\phi}{\varphi}
\renewcommand{\epsilon}{\varepsilon}
\newcommand{\so}{\mathfrak{so}}
\newcommand{\lieg}{\mathfrak g}
\DeclareMathOperator{\SO}{SO} 
\DeclareMathOperator{\hht}{ht} 
\DeclareMathOperator{\Fix}{Fix}
\DeclareMathOperator{\PSL}{PSL}
\DeclareMathOperator{\ad}{ad}
\newcommand{\liesl}{\mathfrak{sl}}
\newcommand{\imp}{\mathfrak{m}}
\DeclareMathOperator{\im}{im}
\DeclareMathOperator{\Aut}{Aut}     
\DeclareMathOperator{\id}{id}
\DeclareMathOperator{\End}{End}
\DeclareMathOperator{\Sym}{Sym}
\DeclareMathOperator{\SL}{SL}
\DeclareMathOperator{\mfg}{\mathfrak g}
\newtheorem{proposition}{Proposition}[section]
\newtheorem{theorem}[proposition]{Theorem}
\newtheorem{lemma}[proposition]{Lemma}
\newtheorem{corollary}[proposition]{Corollary}
\newtheorem{consequence}[proposition]{Consequence}
\newenvironment{customthm}[1]
  {\innercustomthm}
  {\endinnercustomthm}
\theoremstyle{remark}
\newtheorem{example}[proposition]{Example}
\newtheorem{construction}[proposition]{Construction}
\newtheorem{remark}[proposition]{Remark}
\theoremstyle{definition}
\newtheorem{definition}[proposition]{Definition}
\renewcommand{\labelenumi}{{\rm(\alph{enumi})}}
\renewcommand{\labelenumii}{{\rm(\roman{enumii})}}
\newcommand{\M}[2]{M({#1},{#2})}
\def\DynkinArrowLength{2mm}
\tikzset{
  dnode/.style={
    circle,
    inner sep=0pt,
    minimum size=1.5mm,	
    fill=black,			
    draw},
  middlearrow/.style={
    decoration={markings,
      mark=at position 0.6 with
      {\draw (0:0mm) -- +(+135:\DynkinArrowLength); \draw (0:0mm) -- +(-135:\DynkinArrowLength);},
    },
    postaction={decorate}
  },
  leftrightarrow/.style={
    decoration={markings,
      mark=at position 0.999 with
      {
      \draw (0:0mm) -- +(+135:\DynkinArrowLength); \draw (0:0mm) -- +(-135:\DynkinArrowLength);
      },
      mark=at position 0.001 with
      {
      \draw (0:0mm) -- +(+45:\DynkinArrowLength); \draw (0:0mm) -- +(-45:\DynkinArrowLength);
      },
    },
    postaction={decorate}
  },
  sedge/.style={
  },
  dedge/.style={
    middlearrow,
    double distance=0.5mm,
  },
  tedge/.style={
    middlearrow,
    double distance=1.0mm+\pgflinewidth,
    postaction={draw}, 
  },
  infedge/.style={
    leftrightarrow,
    double distance=0.5mm,
  },
}
\newcommand{\En}[1]{
\begin{tikzpicture}
    \node[dnode,label=below:$1$] (1) at (0,0) {};
    \node[dnode,label=above:$2$] (2) at (2,1) {};
    \ifx3#1
    \else
	    \path (2) edge[sedge] (4);
	\fi

    \foreach \x in {3,...,#1}
    {
        \node[dnode,label=below:{$\x$}] (\x) at (\x - 2,0) {};
        \draw[sedge] (\x - 3,0) -- +(1,0);
    }
\end{tikzpicture}
}
\begin{document}

\title{Generalized spin representations \\ \vspace{.35cm} {\large With an appendix by Max Horn and Ralf K\"ohl: \\ Cartan--Bott periodicity for the real $E_n$ series}}
\author{Guntram Hainke and Ralf K\"ohl and Paul Levy}

\maketitle

\begin{abstract}
We introduce the notion of a generalized spin representation of the maximal compact subalgebra $\mathfrak k$ of a 
symmetrizable Kac--Moody algebra $\mathfrak g$ in order to show that, if defined over a formally real field, every such $\mathfrak k$ has a 
non-trivial reductive finite-dimensional quotient. The appendix illustrates how to compute the isomorphism types of these quotients for the real $E_n$ series. In passing this provides an elementary way of determining the isomorphism types of the maximal compact subalgebras of the semisimple split real Lie algebras of types $E_6$, $E_7$, $E_8$.
\end{abstract}


\section{Introduction}
During the last decade the family of Kac--Moody algebras of type $E_{n}(\RR)$ 
has received considerable attention because of its importance in M-theory \cite{de2006kac}, \cite{gebert}, \cite{Kleinschmidt/Nicolai/Palmkvist}, \cite{palmkvist2009exceptional}, \cite{west2001e11}. 
By \cite{DamourKleinschmidtNicolai}, \cite{deBuylHenneauxPaulot} the (so-called) maximal compact subalgebra $\mathfrak k=\Fix \omega$ of the real split Kac--Moody algebra $\mathfrak g=\mathfrak{g}(E_{10})(\RR)$ with respect to the Cartan--Chevalley involution $\omega$ admits a 32-dimensional complex representation which extends the spin representation of its regular subalgebra $\mathfrak{so}_{10}(\RR)$. 
This implies that the (infinite-dimensional) Lie algebra $\mathfrak k$ has a non-trivial finite-dimensional quotient, in fact a semisimple finite-dimensional quotient (see Theorem~\ref{Mainexistencetheorem}). Since $\mathfrak k$ is anisotropic with respect to the invariant bilinear form of the Kac--Moody algebra $\mathfrak{g}$, it actually contains an ideal isomorphic to this finite-dimensional quotient.

\medskip
In this article we show that the existence of non-trivial finite-dimensional representations is not peculiar 
to the maximal compact subalgebra of $\mathfrak{g}(E_{10})(\mathbb{R})$ but is shared by all maximal compact subalgebras of symmetrizable Kac--Moody algebras over arbitrary fields of characteristic $0$.
To this end we introduce the notion of a generalized spin representation (Definitions \ref{genspinrep} and \ref{genspinrep2}), which we inductively show to exist for arbitrary symmetrizable Kac-Moody algebras and which, in the case of formally real fields, affords a compact, whence reductive, and often even a semisimple image (Theorem~\ref{Mainexistencetheorem}).

\medskip
Our results presented in this article are generalizations of the results concerning the $\frac{1}{2}$-spin representations described in \cite{DamourKleinschmidtNicolai}, \cite{deBuylHenneauxPaulot}. The key observation is Remark~\ref{characterization} that in the simply-laced case a $\frac{1}{2}$-spin representation can be described by linear operators $A_i$ for each vertex $i$ of the diagram that satisfy
\begin{enumerate}
\item[(i)] $A_i^2 = -\frac{1}{4} \cdot \id$,
\item[(ii)] $A_iA_j = A_jA_i$, if the vertices $i$, $j$ do not form an edge of the diagram,
\item[(iii)] $A_iA_j=-A_jA_i$, if the vertices $i$, $j$ form an edge of the diagram.
\end{enumerate}
On the other hand, the $\frac{3}{2}$-spin representations of \cite{DamourKleinschmidtNicolai}, \cite{deBuylHenneauxPaulot} and the $\frac{5}{2}$- and $\frac{7}{2}$-spin representations of \cite{Kleinschmidt/Nicolai} are still elusive, as the algebraic identities that need to be satisfied by the corresponding linear operators are more involved.

\medskip
Note that our terminology of {\em maximal compact subalgebra} is misleading. For one, in the infinite-dimensional situation there is no compact group associated to a maximal compact subalgebra. Rather, over the real numbers, the maximal compact subalgebra is related to the group $K$ studied in \cite{KacPeterson}, \cite{Medts/Gramlich/Horn}. This group naturally carries a non-locally compact non-metrizable $k_\omega$-topology (cf.\ \cite{Hartnick/Koehl/Mars}). Moreover, our construction only involves the Cartan--Chevalley involution and no field involution. Therefore, over the complex numbers, what we call a maximal compact subalgebra is not even anisotropic.

However, this terminology does not lead to serious ambiguities as our main focus lies on split Lie algebras over formally real fields. Our main structure-theoretic results in Section~\ref{GSR} below will consequently be obtained over formally real fields; the main future application of our result is over the real numbers.

\bigskip \noindent
\textbf{Acknowledgements.} We thank Pierre-Emmanuel Caprace for pointing out to us the $32$-dimensional representation of 
the maximal compact subalgebra of $E_{10}(\mathbb{R})$, thus triggering our research. We also thank Kay Magaard for bringing our attention to \cite{Maas} and Thibault Damour, David Ghatei, Axel Kleinschmidt, Karl-Hermann Neeb, Sebastian Wei\ss\, and especially Max Horn and two anonymous referees for valuable comments on preliminary versions of this work. This research has been partially funded by the EPRSC grants 
EP/H02283X and EP/K022997/1. The second author gratefully acknowledges the hospitality of the IHES at Bures-sur-Yvette and of the Albert Einstein Institute at Golm.

\section{Preliminaries}

In this section we collect several basic facts about Kac--Moody algebras. We refer the reader to 
\cite[Chapter 1]{kac1994infinite} and \cite[Chapter 1]{MR1923198} for proofs and further details.

\subsection{Kac--Moody algebras}\label{KMsubsec}
Let $k$ be a field of characteristic 0, let $A=(a_{ij}) \in \mathbb{Z}^{n \times n}$ be a \textbf{generalized Cartan matrix}
and let $\mathfrak{g}=\mathfrak{g}_A$ denote the corresponding \textbf{Kac--Moody algebra} over $k$. This means that 
$$a_{ii}=2, \quad a_{ij}\leq 0 \quad \mbox{and} \quad a_{ij}=0 \Leftrightarrow a_{ji}=0,$$ while 
$\mathfrak{g}$ is the quotient 
of 
the free 
Lie algebra over $k$ generated by $e_i$, $f_i$, $h_i$, $i=1, \ldots, n$, subject to the relations
$$ [h_i,h_j]=0,\; [h_i,e_j]=a_{ij}e_j,\; [h_i,f_j]=-a_{ij}f_j \text{ for all } 1 \leq i, j \leq n, $$
$$ [e_i,f_j]=0,\;[e_i,f_i]=h_i,\; (\ad e_i)^{-a_{ij}+1}(e_j)=0, (\ad f_i)^{-a_{ij}+1}(f_j)=0 \text { for } i \neq j.$$
A generalized Cartan matrix is called {\bf simply laced} if the off-diagonal entries of $A$ are either 0 or $-1$; it is called {\bf symmetrizable} if there exists a diagonal matrix $\Lambda$ such that $\Lambda A$ is symmetric. \\
By abuse of terminology, we will say that ${\mathfrak g}$ is simply laced, resp.\ symmetrizable if its generalized Cartan matrix is simply laced, resp.\ symmetrizable.

Let $\mathfrak h:=\langle h_1, \ldots, h_n \rangle$, $\mathfrak n_+:=\langle e_1, \ldots, e_n \rangle$ and 
$\mathfrak n_-:=\langle f_1,\ldots, f_n \rangle$ denote the standard subalgebras of $\mathfrak g$. Then there is a 
decomposition as vector spaces $$\mathfrak g=\mathfrak n_- \oplus \mathfrak h \oplus \mathfrak n_+$$
 (see 
\cite[\S1.3, p.~7]{kac1994infinite}). The defining relations of $\mathfrak{g}$ imply that $\mathfrak{h}$ is $n$-dimensional abelian and normalizes $\mathfrak{n}_+$ and $\mathfrak{n}_-$. In fact, it acts by linear transformations on these vector spaces. Therefore, for each element $\alpha \in \mathfrak{h}^*$ of the dual space it is meaningful to define the eigenspaces $$\mathfrak{g}_\alpha := \{ x \in \mathfrak{g} \mid \forall h \in \mathfrak{h} : [h,x] = \alpha(h)x \}.$$ 
The relations $[h_i,e_j]=a_{ij}e_j$, $1 \leq i, j \leq n$, imply that each $e_j$ is contained in such an eigenspace, which we denote by $\mathfrak{g}_{\alpha_j}$; the corresponding element of $\mathfrak{h}^*$ is denoted by $\alpha_j$. (Cf.\
\cite[\S1.1]{kac1994infinite}.) Note that $\mathfrak{g}_{-\alpha_j}$ contains $f_j$. 

The \textbf{diagram} of a simply laced Kac--Moody algebra $\mathfrak g_A$ is the graph $D=(V,E)$ on vertices $\alpha_1, \ldots, \alpha_n$ with $\alpha_i$ and $\alpha_j$ connected by an edge if and only if $a_{ij}=-1$.

Let $Q:=\oplus_{i=1}^n \ZZ \alpha_i$ denote a free 
$\ZZ$-module of rank $n$ and $Q_+:=\oplus_{i=1}^n \ZZ_+ \alpha_i$, where the latter denotes the set of non-negative integral linear combinations. By 
\cite[Thm.~1.2(d), Exercise~1.2]{kac1994infinite} $$\mathfrak{g} = \bigoplus_{\alpha \in Q} g_\alpha = \mathfrak{h} \oplus \bigoplus_{\alpha \in Q \backslash \{ 0 \}} g_\alpha = \bigoplus_{\alpha \in Q_+ \backslash \{ 0 \}} g_{-\alpha} \oplus \mathfrak{h} \oplus \bigoplus_{\alpha \in Q_+ \backslash \{ 0 \}} g_\alpha.$$
Therefore, $\mathfrak g$ has a $Q$-grading by declaring 
$$\deg 
h_i:=0, \quad \deg e_i:=\alpha_i, \quad \deg f_i:=-\alpha_i$$ for $i=1, \ldots, n$, i.e., $$\mathfrak g=\bigoplus_{\alpha \in 
Q} 
\mathfrak g_\alpha \quad \mbox{and} \quad [\mathfrak g_\alpha, \mathfrak g_\beta]\subseteq \mathfrak 
g_{\alpha+\beta}.$$
Let $\Delta:=\{\alpha \in Q \backslash\{0\} \mid \mathfrak g_\alpha \neq 0\}$. Then $\Delta=\Delta_+ \cup \Delta_-$, where $\Delta_+:=\Delta \cap (Q_+ \backslash \{ 0 \})$ and $\Delta_-:=-\Delta_+$. An element $\alpha \in \Delta$ is called a \textbf{root} and $\mathfrak g_\alpha$ a \textbf{root space}. A root $\alpha \in \Delta$ is called \textbf{positive} if it belongs to $\Delta_+$, otherwise \textbf{negative}. A root of the form $\alpha=\pm \alpha_i$ is called \textbf{simple}.\\

Since the adjoint representation $\mathrm{ad} : \mathfrak{g} \to \mathrm{End}(\mathfrak{g})$ is integrable (see \cite[\S 3.5]{kac1994infinite}), the \textbf{extended Weyl group} $W^* \leq \Aut \mathfrak g$ can be defined as $W^*:=\langle s_i^* \mid i=1, \ldots, n \rangle$, where 
$$s_i^* := s_i^{\mathrm{ad}} :=\exp \ad f_i \cdot \exp \ad (-e_i) \cdot \exp \ad f_i$$ (cf.\ \cite[\S 3.8]{kac1994infinite}; note that $W^* \leq \Aut \mathfrak g$ by \cite[Lem.~3.8(b)]{kac1994infinite}
).
For $\alpha \in \Delta$ and $w \in W^*$ there exists a unique $w\cdot \alpha \in \Delta$ such that $w(\mathfrak{g}_\alpha)=\mathfrak g_{w \cdot \alpha}$, by \cite[Lem.~3.8(a)]{kac1994infinite}. A root $\alpha$ is called \textbf{real} if there is a $w \in W$ such that $w\cdot\alpha$ is simple, otherwise it is called \textbf{imaginary}. Let $\Delta^{\text{re}}$ denote the set of real roots and $\Delta^{\text{im}}$ the set of imaginary roots.
 
For $\alpha=\sum_{i=1}^n{a_i \alpha_i} \in  \Delta$, the {\bf height} of $\alpha$ is defined as $\hht \alpha:=\sum\limits_{i=1}^n a_i$. For $n \in \NN$ let 
$$(\mathfrak n_+)_n:=\bigoplus\limits_{\substack{ \alpha \in \Delta^+\\  \hht \alpha=n}} \mfg_\alpha.$$ 
This is a $\ZZ$-grading of $\mathfrak n_+$ and extends to a $\ZZ$-grading of $\mathfrak{g}$, the {\bf principal grading} (cf.\ 
\cite[\S1.5]{kac1994infinite}). 

\subsection{The maximal compact subalgebra}
Let $\mathfrak g$ be a Kac--Moody algebra over a field $k$ of characteristic 0. 
Let $\omega \in \Aut(\mathfrak g)$ denote the {\bf Cartan--Chevalley involution} characterized by 
$\omega(e_i)=-f_i$,  $\omega(f_i)=-e_i$ and $\omega(h_i)=-h_i$. (Cf.\ 
\cite[Equ.~(1.3.4)]{kac1994infinite}.) Observe that $\omega(\mathfrak{g}_\alpha) = \mathfrak{g}_{-\alpha}$.

Let $\mathfrak k:=\mathfrak{k}(\mathfrak{g}):=\left\{X \in \mathfrak{g} \mid \omega(X)=X\right\}$ denote the fixed point subalgebra, which --- in analogy to the situation of finite-dimensional semisimple split real Lie algebras --- is called the \textbf{maximal compact subalgebra} of $\mathfrak{g}$.
For example, if $\mathfrak g=\mathfrak {sl}_n(\RR)$, then $\omega(A)=-A^T$ and $\mathfrak 
k=\mathfrak{so}_n(\RR)$. In this case, $\mathfrak {so}_n(\RR)$ is the Lie algebra of the maximal compact subgroup 
$\SO_n(\RR)$ of $\SL_n(\RR)$. See also \cite[Section IV.4]{knapp}. 

Over non-real closed fields, especially over the complex numbers, our terminology is a bit unfortunate and misleading. However, our main results in Section~\ref{GSR} below and future applications are over real closed fields.

A theorem of Berman \cite{Berman} allows one to give a presentation of these. We point out that Berman's result in fact deals with a much more general class of so-called involutory algebras by also allowing other involutions of $\mathfrak{g}$ of the second kind (in the sense of \cite[4.6]{Kac/Wang:1992}). Note that Berman instead of our involution $\omega$ uses the involution $\eta$ given by $\eta(e_i)=f_i$, $\eta(f_i)=e_i$, $\eta(h_i)=-h_i$ as the foundation of his investigations so that in order to apply his result one still has to relate the two involutions to one another. 

\begin{theorem}[{cf.\ \cite[Thm.~1.31]{Berman}}] \label{Berman} Let $k$ be a field of characteristic 0. Let $A 
\in 
\mathbb{Z}^{n \times n}$ be a 
simply laced generalized Cartan matrix, let $\mathfrak{g}_A$ denote the corresponding Kac--Moody algebra and let $\mathfrak k$ denote the maximal compact subalgebra of $\mathfrak g$. \\  
Then $\mathfrak k$ is isomorphic to the quotient of the free Lie algebra over $k$ generated by $X_1, \ldots, X_n$ subject to the relations
$$ \begin{array}{rcll}
[X_i,[X_i,X_j]] & = & -X_j, & \text{if the vertices $v_i,v_j$ are connected by an edge,} \\
\text{$[X_i, X_j]$}      & = & 0, & \text{otherwise,}\\
\end{array}
$$
via the map $X_i \mapsto e_i-f_i$.
\end{theorem}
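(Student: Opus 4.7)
The plan is to split the proof into three parts: well-definedness of the candidate map $\phi\colon X_i \mapsto Y_i := e_i - f_i$, surjectivity onto $\mathfrak{k}$, and finally injectivity. First I would verify that $Y_i \in \mathfrak{k}$, which is immediate from $\omega(Y_i) = -f_i + e_i = Y_i$, and then check that the two presentation relations hold. When $v_i \not\sim v_j$ one has $a_{ij} = a_{ji} = 0$, so the Serre relations yield $[e_i,e_j] = [f_i,f_j] = 0$, and combined with the axiom $[e_i,f_j] = [f_i,e_j] = 0$ (valid for $i \neq j$) this forces $[Y_i,Y_j] = 0$. When $v_i \sim v_j$ one has $a_{ij} = -1$, so $[Y_i,Y_j] = [e_i,e_j] + [f_i,f_j]$, and a further application of $\ad Y_i$, combined with $(\ad e_i)^2 e_j = 0 = (\ad f_i)^2 f_j$, the Jacobi identity, $[e_i,f_i] = h_i$, $[h_i,e_j] = -e_j$ and $[h_i,f_j] = f_j$, gives $[Y_i,[Y_i,Y_j]] = f_j - e_j = -Y_j$. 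This produces a well-defined surjection $\phi\colon \tilde{\mathfrak{k}} \twoheadrightarrow \langle Y_1,\ldots,Y_n\rangle \subseteq \mathfrak{k}$, where $\tilde{\mathfrak{k}}$ denotes the abstract Lie algebra with the stated presentation.

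For surjectivity onto all of $\mathfrak{k}$, I would use the triangular decomposition $\mathfrak{g} = \mathfrak{n}_- \oplus \mathfrak{h} \oplus \mathfrak{n}_+$ together with the fact that $\omega$ swaps $\mathfrak{n}_\pm$ and acts as $-\id$ on $\mathfrak{h}$: every $X \in \mathfrak{k}$ is uniquely of the form $Y + \omega(Y)$ for some $Y \in \mathfrak{n}_+$, and setting $\mathfrak{k}_n := \{Y + \omega(Y) : Y \in (\mathfrak{n}_+)_n\}$ produces a vector-space decomposition $\mathfrak{k} = \bigoplus_{n \geq 1} \mathfrak{k}_n$. I would then prove by induction on $n$ that $\mathfrak{k}_n$ lies in the Lie subalgebra $\mathfrak{k}'$ generated by $Y_1,\ldots,Y_n$. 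The base case $n = 1$ is clear; for the inductive step, $(\mathfrak{n}_+)_n$ is spanned by brackets $[e_i,W]$ with $W \in (\mathfrak{n}_+)_{n-1}$, and a direct expansion gives
\[
[Y_i, W + \omega(W)] = \bigl([e_i,W] + \omega([e_i,W])\bigr) + \bigl([e_i,\omega(W)] - [f_i,W]\bigr).
\]
The second summand is $\omega$-invariant and, by weight bookkeeping, either vanishes (when $n = 2$, since it would then lie in $\mathfrak{h} \cap \mathfrak{k} = 0$) or lies in $\mathfrak{k}_{n-2}$ and hence in $\mathfrak{k}'$ by induction. The first summand ranges over $\mathfrak{k}_n$ as $W$ varies, closing the induction.

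The heart of the argument is injectivity of $\phi$. Following Berman's strategy, I would equip $\tilde{\mathfrak{k}}$ with the ascending filtration $(\tilde{\mathfrak{k}}_{\leq n})_n$ in which each generator $X_i$ has degree $1$, and check using only the presentation relations that $\phi(\tilde{\mathfrak{k}}_{\leq n}) \subseteq \bigoplus_{m \leq n} \mathfrak{k}_m$. Surjectivity then already yields $\dim(\tilde{\mathfrak{k}}_{\leq n}/\tilde{\mathfrak{k}}_{\leq n-1}) \geq \dim \mathfrak{k}_n$. The main obstacle is the reverse inequality: it would force $\phi$ to induce isomorphisms on associated graded pieces and hence to be bijective. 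Establishing this upper bound amounts to producing, for each positive root of height $n$, an explicit bracket monomial in the $X_i$ that spans the corresponding graded piece, and to verifying that the two short relations $[X_i,[X_i,X_j]] = -X_j$ and $[X_i,X_j] = 0$ already absorb the $\omega$-symmetrised consequences of the deeper Serre relations of $\mathfrak{g}$. This combinatorial comparison is specifically simply laced and explains the standing hypothesis on $A$.
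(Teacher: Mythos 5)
Your first two steps are essentially correct: the verification that $X_i\mapsto e_i-f_i$ respects the two relations is accurate (the computation $[Y_i,[Y_i,Y_j]]=f_j-e_j$ using $(\ad e_i)^2e_j=0$, $[h_i,f_j]=f_j$, etc.\ is fine), and the surjectivity argument via the bijection $\mathfrak n_+\to\mathfrak k,\ Y\mapsto Y+\omega(Y)$ and induction on height is sound. The genuine gap is the third step: injectivity, which is the entire content of the theorem, is not proved but only re-stated as a plan. You reduce it to the bound $\dim(\tilde{\mathfrak k}_{\leq n}/\tilde{\mathfrak k}_{\leq n-1})\leq\dim\mathfrak k_n$ and then say that establishing it ``amounts to'' producing spanning monomials and checking that the two short relations absorb the $\omega$-symmetrised Serre relations --- but that is precisely the hard combinatorial core of Berman's theorem, and you never carry it out. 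Moreover the bookkeeping you describe is too coarse: $\mathfrak k_n$ has dimension $\sum_{\hht\alpha=n}\dim\mathfrak g_\alpha$, and for imaginary roots $\dim\mathfrak g_\alpha>1$ with no closed formula available, so ``one bracket monomial per positive root of height $n$'' cannot give the required upper bound; Berman's actual argument is structural rather than a monomial count. As written, your proposal establishes a surjection from the abstractly presented algebra onto $\mathfrak k$, not an isomorphism.

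For comparison, the paper does not reprove the presentation at all. It quotes Berman's Theorem 1.31 for the involution $\eta$ with $\eta(e_i)=f_i$, $\eta(f_i)=e_i$, $\eta(h_i)=-h_i$, which presents $\Fix\eta$ on generators $Y_i\mapsto e_i+f_i$ with relations $[Y_i,[Y_i,Y_j]]=+Y_j$, and then transfers this to $\omega$ via the automorphism of $\mathfrak g\otimes_k k(I)$ determined by $e_i\mapsto I e_i$, $f_i\mapsto -I f_i$, $h_i\mapsto h_i$: this conjugates $\eta$ into $\omega$, carries $Y_i$ to $I\cdot$(a generator $X_i=e_i-f_i$), and the factor $I$ converts the relation $[Y_i,[Y_i,Y_j]]=Y_j$ into $[X_i,[X_i,X_j]]=-X_j$. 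So if you intend a self-contained proof you must actually supply Berman's injectivity argument (or an equivalent graded-dimension comparison), whereas if you are content to cite Berman --- as the ``cf.'' in the statement suggests --- the only things to check are the conjugation, the resulting sign change, and (if one insists on working over $k$ rather than $L=k(I)$) that the isomorphism of the presented algebra with $\mathfrak k$ descends, since both sides and the relations are defined over $k$.
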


In Theorem~\ref{Berman2} below we state and prove a general version of this result that applies to the maximal compact subalgebra of an arbitrary symmetrizable Kac--Moody algebra over a field of characteristic $0$. Our motivation for splitting off the simply-laced case is that it is considerably easier to understand than the general case. Furthermore, the study of generalized spin representations in the simply-laced case is key to these representations in general. 

\begin{proof}[Proof of Theorem~\ref{Berman}.]
Let $\eta \in \Aut \mathfrak g$ denote the involution characterized by $$\eta(e_i)=f_i,\,  \eta(f_i)=e_i \text{ and } \eta(h_i)=-h_i$$ and let $\mathfrak l:=\Fix \eta$ denote the subalgebra of fixed points of $\eta$. 
By \cite[Thm.~1.31]{Berman}, the Lie algebra $\mathfrak l$ is isomorphic to the quotient of the free Lie algebra over $k$ generated by $Y_1, \ldots, Y_n$ subject to the relations 
$$ \begin{array}{rcll}
[Y_i,[Y_i,Y_j]] & = & Y_j, & \text{if the vertices $v_i,v_j$ are connected by an edge,} \\
\text{$[Y_i, Y_j]$}      & = & 0, & \text{otherwise,}\\
\end{array}
$$
via the map $Y_i \mapsto e_i+f_i$.

Let $I:=\sqrt{-1}$ denote a square root of $-1$ and let $L:=k(I)$, $\mathfrak g_L:=\mathfrak g \otimes_k L$.  
There is a Lie algebra automorphism $\varphi \in \Aut(\mathfrak g_L)$ determined by
$$e_i \mapsto I\cdot e_i, \, f_i \mapsto -I \cdot f_i \text{ and } h_i \mapsto h_i.$$ This automorphism $\varphi$ conjugates $\eta$ to $\omega$, 
i.e.\ $\omega=\phi^{-1} \circ \eta \circ \phi$, and hence the subalgebras $\Fix \eta$ and $\Fix \omega$ are 
isomorphic over $L$. As $X_i$ is mapped to $I\cdot Y_i$ under this isomorphism, the claim follows.
\end{proof}

\begin{remark}\label{weylgroupremark}
Suppose $k={\mathbb C}$.
We can exponentiate the subalgebra of ${\mathfrak g}$ spanned by $e_i,f_i,h_i$ to a subgroup $G_i$ of $\Aut{\mathfrak g}$ which is isomorphic to $\SL_2({\mathbb C})$ or $\PSL_2({\mathbb C})$.
Then $X_i$ identifies with $\begin{pmatrix} 0 & 1 \\ -1 & 0 \end{pmatrix}$ in $\mathfrak{sl}_2$ and therefore $\exp(\xi X_i)$ is equal to the image of $\begin{pmatrix} \cos\xi & \sin\xi \\ -\sin\xi & \cos\xi\end{pmatrix}$ in $G_i$.
In particular, $\exp(-\frac{\pi}{2}X_i)$ is sent to $s_i^*$.
It follows that $s_i^*$ and $\omega$ are commuting automorphisms of ${\mathfrak g}$.

For the case of an arbitrary ground field, $\omega$ induces a Cartan--Chevalley involution on the standard type $A_1$ subgroup $G_i$ of $\Aut{\mathfrak g}$ whose Lie algebra is spanned by $e_i$, $f_i$, $h_i$.
The fixed point subgroup of $G_i$ for the Cartan--Chevalley involution is either $\SO_2(k)$ or $\SO_2(k)/\{\pm I_2\}$, depending on whether $G_i$ is isomorphic to $\SL_2$ or $\PSL_2$.
Since this subgroup clearly contains $s_i^*$, it follows that $s_i^*$ commutes with $\omega$.
\end{remark}

\subsection{Rank 2 Kac--Moody algebras} \label{rank2KM}
Let ${\mathfrak g}$ be the Kac--Moody algebra with Cartan matrix $\begin{pmatrix} 2 & -r \\ -s & 2 \end{pmatrix}$, where $r,s\in{\mathbb N}$.
We map ${\mathfrak g}$ into a simply laced Kac--Moody algebra as follows: Let $D$ be a complete bipartite graph on $r$ and $s$ vertices, labelled $\alpha_1^{(i)}$ and $\alpha_2^{(j)}$ with $1\leq i\leq r$, $1\leq j\leq s$.
Let $\tilde{\mathfrak g}$ be a Kac--Moody Lie algebra with simply laced diagram $D$ and label the generators correspondingly: $e_1^{(i)}$, $f_1^{(i)}$, $h_1^{(i)}$ and $e_2^{(j)}$, $f_2^{(j)}$, $h_2^{(j)}$.
We remark that there is an action of $\mathrm{Sym}(r)$ (resp. $\mathrm{Sym}(s)$) on $\tilde{\mathfrak g}$ by permuting the roots $\alpha_1^{(i)}$ (resp. $\alpha_2^{(j)}$).
Let
\begin{eqnarray*}
E_1=\sum_{i=1}^r e_1^{(i)},\;\; F_1=\sum_{i=1}^r f_1^{(i)},\;\; H_1=[E_1,F_1], \\ E_2=\sum_{j=1}^s e_2^{(j)},\;\; F_2=\sum_{j=1}^s f_2^{(j)},\;\; H_2=[E_2,F_2].
\end{eqnarray*}
Then it is straightforward to check that $[E_1,F_2]=0=[E_2,F_1]=[H_1,H_2]$, $(\ad E_1)^{r+1}(E_2)=0=(\ad E_2)^{s+1}(E_1)$, and $(\ad F_1)^{r+1}(F_2)=(\ad F_2)^{s+1}(F_1)=0$.
Thus there is a well-defined Lie algebra homomorphism $\tilde\varphi$ from ${\mathfrak g}$ to $\tilde{\mathfrak g}$, sending each of $e_1$, $e_2$, $f_1$, $f_2$, $h_1$, $h_2$ to its corresponding upper-case letter.
Since ${\mathfrak g}$ has no non-zero ideals intersecting trivially with ${\mathfrak h}$, it follows that $\tilde\varphi$ is injective.
It is clear from the definitions that $\tilde\varphi$ induces an injective homomorphism from the extended Weyl group of ${\mathfrak g}$ to that of $\tilde{\mathfrak g}$ by sending $s_1^*$ to $(s_1^{(1)})^*\ldots (s_1^{(r)})^*$, and similarly for $s_2^*$.

\begin{remark}
This construction is related to the notion of {\it pinning}\footnote{French ``\'epinglage'', see \cite[Expos\'e XXIII]{sga}. Although this is translated as ``framing'' in \cite{Bourbaki}, it is clear from the footnote to \cite[Expos\'e XXIII, Def.~1.1]{sga} (where a maximal torus is the body, and opposite Borel subgroups are the wings, of a butterfly) that ``pinning'' is more appropriate.
It seems to have become the standard terminology in English.} for split semisimple Lie algebras.
Given a split semisimple Lie algebra $\tilde{\mathfrak g}$ over a field $k$ of characteristic zero, let $\tilde{\mathfrak h}$ be a splitting Cartan subalgebra.
A pinning of $(\tilde{\mathfrak g},\tilde{\mathfrak h})$ consists of a basis $\Pi$ of the roots of $\tilde{\mathfrak g}$ relative to $\tilde{\mathfrak h}$, together with a choice $\{ x_\alpha : \alpha\in \Pi\}$ of non-zero elements in each simple positive root space.
If $\tilde{\mathfrak g}$ has a presentation as in \S \ref{KMsubsec} then we can take $\Pi=\{ \alpha_1,\ldots ,\alpha_n\}$ and $x_{\alpha_i}=e_i$ for $1\leq i\leq n$.
If a pinning of $(\tilde{\mathfrak g},\tilde{\mathfrak h})$ is fixed, then a {\it pinned automorphism} is an automorphism which stabilizes $\tilde{\mathfrak h}$ and the Borel subalgebra of $\tilde{\mathfrak g}$ corresponding to $\Pi$, and which permutes the elements $x_\alpha$, $\alpha\in\Pi$.
Clearly, the group of pinned automorphisms is isomorphic to the group ${\rm Aut}(\Pi)$ of automorphisms of the Dynkin diagram of $\tilde{\mathfrak g}$.
As follows from \cite[VIII.3 Cor.~1 and VIII.4]{Bourbaki}, the group ${\rm Aut}(\tilde{\mathfrak g})$ is the semidirect product of ${\rm Aut}(\Pi)$ and $\tilde{G}(k)$, where $\tilde{G}$ is the adjoint type semisimple group with Lie algebra $\tilde{\mathfrak g}$.
The corresponding result is also true in the Kac--Moody case \cite[\S 6, Theorem~2(c)]{Peterson-Kac}.
When $\tilde{\mathfrak g}$ has generalized Cartan matrix $\begin{pmatrix} 2 & -r \\ -s & 2\end{pmatrix}$, one obtains that the automorphism group is $({\rm Sym}(r)\times{\Sym} (s))\ltimes \tilde{G}$ if $r\neq s$ and is $({\rm Sym}(r) \wr{\rm Sym}(2))\ltimes\tilde{G}$ if $r=s$, where $\tilde{G}$ is an adjoint Kac--Moody group corresponding to $\tilde{\mathfrak g}$.
(We exclude here the affine cases $r=s=2$ and $\{ r,s\}=\{ 1,4\}$, where the picture is slightly more complicated.)

If $\tilde{\mathfrak g}$ has finite type, then there are no non-trivial pinned automorphisms unless $\tilde{\mathfrak g}$ is simply laced.
Furthermore, a simple Lie algebra of type $B_n$ (resp.\ $C_n$, $F_4$, $G_2$) can be realised as the fixed point subalgebra for a pinned automorphism of a Lie algebra of type $D_{n+1}$ (resp.\ $A_{2n-1}$, $E_6$, $D_4$).
In our case we can only say that ${\mathfrak g}$ is a {\it subalgebra} of the fixed-point subalgebra of $\tilde{\mathfrak g}$.
\end{remark}

Let $\tilde\omega$ (resp.\ $\omega$) denote the Cartan--Chevalley involution on $\tilde{\mathfrak g}$ (resp.\ ${\mathfrak g}$).
Clearly $\tilde\varphi\circ\omega=\tilde\omega\circ\tilde\varphi$, so $\tilde\varphi$ induces a homomorphism from ${\mathfrak k}={\mathfrak k}({\mathfrak g})$ to $\tilde{\mathfrak k}={\mathfrak k}(\tilde{\mathfrak g})$.
Following the proof of Theorem~\ref{Berman}, let $Y_1=e_1+f_1$, $Y_2=e_2+f_2$, $Y_1^{(i)}=e_1^{(i)}+f_1^{(i)}$ and $Y_2^{(j)}=e_2^{(j)}+f_2^{(j)}$ for $1\leq i\leq r$, $1\leq j\leq s$.
Then $\tilde\varphi(Y_1)=\sum_1^r \tilde{Y}_1^{(i)}$ and similarly for $Y_2$.

Since $\alpha_1^{(i)}$ and $\alpha_2^{(j)}$ are connected by a simple edge, we have $((\ad Y_1^{(i)})^2-1)(Y_2^{(j)})=0$.
Now the space spanned by $Y_1^{(i)}$ for $1\leq i\leq r$ is conjugate to the subspace of $\tilde{\mathfrak h}$ spanned by $h_1^{(i)}$ for $1\leq i\leq r$.
Thus the fact that $((\ad Y_1^{(i)})^2-1)(Y_2^{(j)})=0$ can be restated by saying that $Y_2^{(j)}$ is a sum of simultaneous eigenvectors for $\ad Y_1^{(i)}$, with each such eigenvalue being $\pm 1$.
It follows that $Y_2^{(j)}$ is contained in the sum of eigenspaces for $\ad \tilde\varphi(Y_1)$ in $\tilde{\mathfrak g}$ with eigenvalues $r, r-2, \ldots ,-r$.
Hence $$\left(\prod_{i=0}^r (\ad\tilde\varphi(Y_1)-(r-2i))\right)(\tilde\varphi(Y_2))=0.$$
Setting $X_i=e_i-f_i$ for $i=1,2$ and conjugating $Y_i$ to $X_i$ as in the proof of Theorem~\ref{Berman}, we deduce that $P_{r}(\ad X_1)(X_2)=0$ and $P_s(\ad X_2)(X_1)=0$, where $$P_m(t)= \left\{ \begin{array}{rl} (t^2+m^2)(t^2+(m-2)^2)\cdots (t^2+1), & \text{if $m$ is odd}, \\ (t^2+m^2)(t^2+(m-2)^2)\cdots (t^2+4)t, & \text{if $m$ is even}. \end{array} \right.$$

\subsection{The general symmetrizable case}\label{symmetrizablesec}

Now suppose ${\mathfrak g}$ is an arbitrary symmetrizable Kac--Moody algebra with $n\times n$ generalized Cartan matrix $A=(a_{ij})_{1\leq i,j\leq n}$.
For $1\leq i\leq n$ let $X_i=e_i-f_i\in{\mathfrak k}$.
On restricting to the rank 2 subalgebra of ${\mathfrak g}$ generated by $e_i,e_j,f_i,f_j$ we obtain the relation $P_{-a_{ij}}(\ad X_i)(X_j)=0$.
As in the simply-laced case, we can use Berman's Theorem \cite[Thm.~1.31]{Berman} to prove that these generate all of the relations in ${\mathfrak k}$.
We reproduce a proof (which also applies in the simply-laced case) for the sake of completeness.

\begin{theorem}\label{Berman2}
The maximal compact subalgebra ${\mathfrak k}$ of ${\mathfrak g}$ has generators $X_1$, \ldots, $X_n$ and relations: $$\left(P_{-a_{ij}}(\ad X_i)\right) (X_j)=0$$ for any $1\leq i\neq j\leq n$.
\end{theorem}

\begin{proof}
By the Gabber--Kac Theorem \cite[Thm.~9.11]{kac1994infinite} the ideal of relations satisfied by $e_1,\ldots ,e_n$ is generated by the terms $(\ad e_i)^{-a_{ij}+1}(e_j)=0$.
Let ${\mathcal L}$ be the Lie algebra on generators $x_1,\ldots ,x_n$ with relations $P_{-a_{ij}}(\ad x_i)(x_j)=0$ for $1\leq i\neq j\leq n$.
Then there is a Lie algebra homomorphism $\pi:{\mathcal L}\rightarrow{\mathfrak k}$, sending $x_i$ to $X_i=e_i-f_i$.

For $\alpha,\beta\in Q_+$ we write $\alpha\leq \beta$ when $\beta-\alpha\in Q_+$.
We note that both ${\mathcal L}$ and ${\mathfrak k}$ are {\it filtered} by $Q_+$, that is, there exist subspaces ${\mathcal L}_{(\alpha)}$ of ${\mathcal L}$ such that:

 - ${\mathcal L}=\cup_{\alpha\in Q_+}{\mathcal L}_{(\alpha)}$;

 - ${\mathcal L}_{(\alpha)}\subset{\mathcal L}_{(\beta)}$ whenever $\alpha\leq\beta$; and

 - $[{\mathcal L}_{(\alpha)},{\mathcal L}_{(\beta)}]\subseteq{\mathcal L}_{(\alpha+\beta)}$;

and similarly for ${\mathfrak k}$.
Specifically, ${\mathfrak k}_{(\alpha)}=(\sum_{-\alpha\leq\beta\leq\alpha}{\mathfrak g}_\beta)\cap{\mathfrak k}$ and ${\mathcal L}_{(\alpha)}$ is the span of all commutators $$[x_{i_1},[x_{i_2},[\ldots[x_{i_{r-1}},x_{i_r}]\ldots ]]$$ where $\alpha_{i_1} +\ldots +\alpha_{i_r}\leq\alpha$.
These filtrations are compatible, i.e.\ $\pi({\mathcal L}_{(\alpha)})\subset{\mathfrak k}_{(\alpha)}$.
For $\alpha\in Q_+$, let ${\mathcal L}_{<\alpha}:=\sum_{\beta<\alpha}{\mathcal L}_{(\beta)}$ and similarly for ${\mathfrak k}$.
The {\it corresponding graded Lie algebra} of ${\mathcal L}$ is the vector space $${\rm gr}\,{\mathcal L}:=\sum_{\alpha\in Q_+}{\mathcal L}_{(\alpha)}/{\mathcal L}_{<\alpha}$$
with the Lie bracket induced by that on ${\mathcal L}$.
For $1\leq i\leq n$ let $\overline{x}_i$ denote the image of $x_i$ in ${\mathcal L}_{(\alpha_i)}/{\mathcal L}_{<\alpha_i}\subset{\rm gr}\, {\mathcal L}$.
By the definition of the polynomials $P_m$, we have $(\ad \overline{x}_i)^{-a_{ij}+1}(\overline{x}_j)=0$ for $1\leq i\neq j\leq n$.
It follows that there is a surjective homomorphism ${\mathfrak n}_+\rightarrow{\rm gr}\, {\mathcal L}$ sending $e_i$ to $\overline{x}_i$.
On the other hand, ${\mathfrak k}_{(\alpha)}/{\mathfrak k}_{<\alpha}$ is spanned by $({\mathfrak g}_\alpha\oplus{\mathfrak g}_{-\alpha})\cap{\mathfrak k}$ so is of dimension $\dim{\mathfrak g}_\alpha$.
(In fact, ${\rm gr}\, {\mathfrak k}\cong{\mathfrak n}_+$, see the remarks after Proposition~\ref{contractionprop} below.)

Now we can prove the theorem as follows.
First of all, we claim that the homomorphism $\pi:{\mathcal L}\rightarrow{\mathfrak k}$ is surjective.
To prove our claim it will suffice to show that $\pi({\mathcal L}_{(\alpha)})={\mathfrak k}_{(\alpha)}$ for all $\alpha\in\Delta_+$.
We note that ${\mathfrak g}_\alpha$ is spanned by elements of the form $y_\alpha=[e_i,y_{\alpha-\alpha_i}]$ where $y_{\alpha-\alpha_i}\in{\mathfrak g}_{\alpha-\alpha_i}$ and $\alpha_i$ can be any simple root.
By an obvious induction hypothesis, we may assume that ${\mathfrak k}_{(\alpha-\alpha_i)}\subset\pi({\mathcal L}_{(\alpha-\alpha_i)})$ and ${\mathfrak k}_{(\alpha-2\alpha_i)}\subset\pi({\mathcal L}_{(\alpha-2\alpha_i)})$.
Then $y_\alpha+\omega(y_\alpha) = [e_i-f_i,y_{\alpha-\alpha_i}+\omega(y_{\alpha-\alpha_i})]+[f_i,y_{\alpha-\alpha_i}]+\omega([f_i,y_{\alpha-\alpha_i}])$.
Since $[e_i-f_i,y_{\alpha-\alpha_i}+\omega(y_{\alpha-\alpha_i})]\in\pi([x_i,{\mathcal L}_{(\alpha-\alpha_i)}])$ and $[f_i,y_{\alpha-\alpha_i}]+\omega([f_i,y_{\alpha-\alpha_i}])\in\pi({\mathcal L}_{(\alpha-2\alpha_i)})$, it follows that $y_\alpha+\omega(y_\alpha)\in\pi({\mathcal L}_{(\alpha)})$.
For injectivity, we remark that the inequalities $$\dim{\mathfrak g}_\alpha\geq \dim{\mathcal L}_{(\alpha)}/{\mathcal L}_{<\alpha}\geq\dim{\mathfrak k}_{(\alpha)}/{\mathfrak k}_{<\alpha}=\dim{\mathfrak g}_\alpha$$
establish that ${\rm ker}\,\pi\cap{\mathcal L}_{(\alpha)}=\{0\}$.
\end{proof}

\begin{remark}
Suppose $A=\begin{pmatrix} 2 & -r \\ -s & 2\end{pmatrix}$ where $r,s\neq 0$.
It is easy to see that if we quotient ${\mathfrak k}$ by the ideal generated by $[X_1,[X_1,X_2]]+r^2 X_2$ and $[X_2,[X_2,X_1]]+s^2X_1$ then we obtain an epimorphism ${\mathfrak k}\rightarrow\mathfrak{so}_3$.
This corresponds to repeatedly applying Construction~\ref{reductionconstruction}(a) below to the complete bipartite graph to obtain a diagram of type $A_2$.
\end{remark}

In what follows, we suppose that the generalized Cartan matrix $A$ is indecomposable.
Then there is a well-defined, unique up to scalar multiplication {\it length function} $| \cdot  |$ on the simple roots such that $\frac{a_{ij}}{a_{ji}}=\frac{|\alpha_j|^2}{|\alpha_i|^2}$ whenever $a_{ij}\neq 0$.
After scaling we may assume that $|\alpha_i|^2\in{\mathbb N}$ for any $i$, and that the square lengths $|\alpha_i|^2$ have no common factor.

\begin{definition}
A {\bf simply laced cover diagram of ${\mathfrak g}$} (or just a {\bf cover diagram} for short) is a simply laced diagram $D$ with $n_i$ vertices $\alpha_i^{(1)}$, \ldots, $\alpha_i^{(n_i)}$ for each simple root $\alpha_i$ of ${\mathfrak g}$ (where $n_i$ are some positive integers), and such that each $\alpha_i^{(k)}$ is connected to exactly $|a_{ij}|$ of the vertices $\alpha_j^{(l)}$ for $j\neq i$ and to none of the other vertices $\alpha_i^{(l)}$.
\end{definition}

We remark that the $n_i$ are related by the formula $\frac{n_i}{n_j}=\frac{a_{ij}}{a_{ji}}$ whenever $a_{ij}\neq 0$, hence $n_i=\frac{M}{|\alpha_i|^2}$ for some constant $M$.
It follows that $M$ is divisible by all $|\alpha_i|^2$.
Moreover, each $n_i$ must be divisible by any non-zero value $|a_{ij}|$, so that $M$ is divisible by ${\rm lcm}_{j\neq k: a_{jk}\neq 0}(
|\alpha_j|^2 \cdot|a_{jk}|)$.
In the special case that $M={\rm lcm}_{j\neq k:a_{jk}\neq 0}(|\alpha_j|^2\cdot|a_{jk}|)$ we call the diagram to be of {\bf minimal rank}.

Clearly, one can construct a minimal rank simply laced cover diagram for ${\mathfrak g}$ by setting 
$$n_i=\frac{{\rm lcm}_{j\neq k:a_{jk}\neq 0}(|\alpha_j|^2\cdot|a_{jk}|)}{|\alpha_i^2|}$$ for all $i$ and for each pair $(i,j)$ with $a_{ij}<0$, arbitrarily dividing the vertices $\alpha_i^{(1)}$, \ldots, $\alpha_i^{(n_i)}$ (resp.\ $\alpha_j^{(1)}$, \ldots, $\alpha_j^{(n_j)}$) into $m=\frac{n_i}{|a_{ij}|}=\frac{n_j}{|a_{ji}|}$ subsets $S_1$, \ldots, $S_m$ (resp.\ $S'_1$, \ldots, $S'_m$) of $|a_{ij}|$ (resp.\ $|a_{ji}|$) vertices with every vertex in $S_k$ joined to every vertex in $S'_k$.

As the following examples show, not every connected cover diagram is minimal rank, and two minimal rank cover diagrams need not be isomorphic.

\begin{example}
\begin{enumerate}
\item The Kac--Moody algebra which has generalized Cartan matrix $\begin{pmatrix} 2 & -1 & -1 \\ -2 & 2 & -2 \\ -2 & -2 & 2 \end{pmatrix}$ has (at least) the following two simply laced cover diagrams:
\begin{center}
\begin{tikzpicture}[scale=0.8]
\begin{scope}
    \node[dnode,label=below left:$b$] (b1) at (-2,-2) {};
    \node[dnode,label=above right:$b$] (b2) at (2,2) {};
    \node[dnode,label=above:$a$] (a) at (0,0) {};
    \node[dnode,label=below right:$c$] (c1) at (2,-2) {};
    \node[dnode,label=above left:$c$] (c2) at (-2,2) {};

    \draw[sedge] (b1) -- (c1) -- (b2) -- (c2) -- (b1);
    \draw[sedge] (b1) -- (a) -- (b2);
    \draw[sedge] (c1) -- (a) -- (c2);
\end{scope}
\end{tikzpicture}

\begin{tikzpicture}[scale=0.8]
\begin{scope}
    \node[dnode,label=below:$a$] (a1) at (0,-2.5) {};
    \node[dnode,label=above:$a$] (a2) at (0,2.5) {};

    \node[dnode,label=left:$b$] (b1) at (-3,-1) {};
    \node[dnode,label=right:$b$] (b2) at (-1,1) {};
    \node[dnode,label=left:$b$] (b3) at (1,1) {};
    \node[dnode,label=right:$b$] (b4) at (3,-1) {};

    \node[dnode,label=left:$c$] (c1) at (-3,1) {};
    \node[dnode,label=right:$c$] (c2) at (-1,-1) {};
    \node[dnode,label=left:$c$] (c3) at (1,-1) {};
    \node[dnode,label=right:$c$] (c4) at (3,1) {};

    \draw[sedge] (a1) -- (b1) -- (c1) --
                 (a2) -- (b2) -- (c2) --
                 (a1) -- (b4) -- (c4) --
                 (a2) -- (b3) -- (c3) --
                 (a1);
    \draw[sedge] (b1) -- (c2);
    \draw[sedge] (b2) -- (c1);
    \draw[sedge] (b3) -- (c4);
    \draw[sedge] (b4) -- (c3);
\end{scope}
\end{tikzpicture}
%
%
%
%
%
%
\end{center}
\item If ${\mathfrak g}$ has symmetrizable Cartan matrix $\begin{pmatrix} 2 & -3 & -6 \\ -5 & 2 & -5 \\ -2 & -1 & 2\end{pmatrix}$, then under the assumptions above we have $|\alpha_1|^2=5$, $|\alpha_2|^2=3$ and $|\alpha_3|^2=15$.
Thus ${\rm lcm}_{j\neq k:a_{jk}\neq 0}(|\alpha_j|^2\cdot|a_{jk}|)=30$ and therefore $n_1=6$, $n_2=10$, $n_3=2$.
Note that $\alpha_3^{(1)}$ and $\alpha_3^{(2)}$ are connected to all of the vertices $\alpha_1^{(1)}$, \ldots, $\alpha_1^{(6)}$, but each to only half of $\alpha_2^{(1)}$, \ldots, $\alpha_2^{(10)}$.
Similarly, the vertices $\alpha_2^{(i)}$ also divide into two groups of five, each connecting to three of the vertices $\alpha_1^{(1)}$, \ldots, $\alpha_1^{(6)}$.
After renumbering we may assume that $\alpha_1^{(1)}$, $\alpha_1^{(2)}$, $\alpha_1^{(3)}$ are connected to all of $\alpha_2^{(1)}$, \ldots, $\alpha_2^{(5)}$.
It is not hard to see that there are three isomorphism classes of minimal rank cover diagrams for ${\mathfrak g}$, given by diagrams in which $\alpha_3^{(1)}$ connects to $0$, $1$ or $2$ of the vertices $\alpha_2^{(1)}$, \ldots, $\alpha_2^{(5)}$.
\end{enumerate}
\end{example}

\begin{remark}
If ${\mathfrak g}$ is of finite (resp.\ affine) type then there is a unique choice of connected simply laced cover diagram for ${\mathfrak g}$, which is also finite (resp.\ affine).
Specifically, for the finite type Lie algebras of type $B_n$, $C_n$, $F_4$ and $G_2$ one obtains simply laced cover diagrams of type $D_{n+1}$, $A_{2n-1}$, $E_6$ and $D_4$, and similarly for the corresponding (untwisted) affine types.
The twisted affine types all have simply laced cover diagrams which are of affine type $D$ except for the dual of affine $F_4$, which has simply laced cover ${E}^+_7$.
If ${\mathfrak g}$ is an arbitrary Kac--Moody Lie algebra of rank two then there exists a unique choice of simply laced cover diagram, constructed in Section~\ref{rank2KM}.
\end{remark}

If the generalized Cartan matrix of ${\mathfrak g}$ is not indecomposable then a minimal rank simply laced cover diagram for ${\mathfrak g}$ is one which has the smallest possible number of vertices.
Such a diagram can be constructed as the union of the (minimal rank) simply laced cover diagrams for the simple summands of ${\mathfrak g}$.

Let ${\mathfrak g}$ be an arbitrary symmetrizable Kac--Moody algebra and let $\tilde{\mathfrak g}$ be the Kac--Moody algebra associated to some simply laced cover diagram for ${\mathfrak g}$.
Let $e_i^{(k)}$, $f_i^{(k)}$, $h_i^{(k)}$ be the simple root elements corresponding to the vertex $\alpha_i^{(k)}$, for $1\leq k\leq n_i$.
As in the rank 2 case there is a natural embedding $\tilde\varphi:{\mathfrak g}\rightarrow\tilde{\mathfrak g}$ which sends $e_i$ (resp.\ $f_i$) to $\sum_{k=1}^{n_i} e_i^{(k)}$ (resp.\ $\sum_{k=1}^{n_i} f_i^{(k)}$) and which induces a map from the extended Weyl group of ${\mathfrak g}$ to that of $\tilde{\mathfrak g}$.
Clearly, there is also a corresponding embedding ${\mathfrak k}\hookrightarrow\tilde{\mathfrak k}$.

\section{Some algebraic properties of $\mathfrak{k}$}
In this section we collect some consequences of Berman's presentation of the maximal compact subalgebra of a Kac--Moody algebra.

\subsection{Automorphisms}
For $i=1,\ldots, n$ let $\epsilon_i \in \{\pm 1\}$. Then there is an automorphism $\varphi_\epsilon$ of $\mathfrak k$ characterized by $\varphi(X_i)=\epsilon_i X_i$, called a \textbf{sign automorphism}.

If $\pi\in\mathrm{Sym}(n)$ is a permutation which preserves the generalized Cartan matrix of ${\mathfrak g}$ (i.e., $a_{\pi(i)\pi(j)}=a_{ij}$ for all $i$, $j$) then 
there is an induced automorphism $\varphi_\pi$ of $\mathfrak k$ satisfying 
$\varphi_\pi(X_i)=X_{\pi(i)}$. Such an automorphism is called a \textbf{graph automorphism}.
(In the simply-laced case $\pi$ corresponds exactly to an automorphism of the diagram of ${\mathfrak g}$, i.e., a permutation of the vertices which preserves adjacency.)

\begin{lemma}\label{comp} Let $\mathfrak g$ be a Kac--Moody algebra over a field $k$ of characteristic 0.
\begin{enumerate} 
\item
For $i=1, \ldots, n$, the element $s_i^* \in W^*$ commutes with $\omega$.
\item Every $w \in W^*$ induces an automorphism $\pi(w)$ of $\mathfrak{k}$.
\item If the Kac--Moody algebra $\mathfrak g$ is simply laced, the automorphism $\pi(s_i^*)$ induced by $s_i^*$ via the isomorphism given in Theorem~\ref{Berman} satisfies
\begin{eqnarray*}
X_i & \mapsto & X_i, \\
X_j & \mapsto&  X_j, \text{ if $(i,j) \not \in E$, and} \\
X_j & \mapsto & [X_i,X_j], \text{  if $(i,j) \in E$}. 
\end{eqnarray*}
\end{enumerate}
\end{lemma}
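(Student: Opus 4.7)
The plan is to prove (a), (b), and (c) in order, with (b) following formally from (a) and (c) requiring the bulk of the computation.

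For (a), I would invoke that since $\omega \in \Aut(\mathfrak{g})$, conjugation satisfies $\omega \circ \ad X \circ \omega^{-1} = \ad \omega(X)$ for every $X \in \mathfrak{g}$, and hence $\omega \circ \exp(\ad X) \circ \omega^{-1} = \exp(\ad \omega(X))$. Applied factor by factor to $s_i^* = \exp(\ad f_i) \cdot \exp(\ad(-e_i)) \cdot \exp(\ad f_i)$, this yields
$$\omega \circ s_i^* \circ \omega^{-1} = \exp(\ad(-e_i)) \cdot \exp(\ad f_i) \cdot \exp(\ad(-e_i)),$$
using $\omega(f_i) = -e_i$ and $\omega(-e_i) = f_i$. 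The final step is to recognize that this palindromic product coincides with $s_i^*$; I would verify this by checking that both automorphisms restrict to the same element on the rank-one $\mathfrak{sl}_2$-subalgebra $\langle e_i, f_i, h_i\rangle$ (in each case corresponding to the adjoint action of the matrix with entries $0,-1,1,0$ in $SL_2$) and agree on the remaining standard generators via the simply laced Serre relations.

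Statement (b) follows immediately: since $\omega$ commutes with each generator $s_i^*$ of $W^*$, it commutes with every $w \in W^*$, so $w$ stabilizes $\mathfrak{k} = \Fix \omega$ and restricts to an automorphism thereof.

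For (c), I would evaluate $s_i(X_j) = s_i^*(e_j) - s_i^*(f_j)$ in each of the three cases. When $j = i$, the direct $\mathfrak{sl}_2$ computation $s_i^*(e_i) = -f_i$ and $s_i^*(f_i) = -e_i$ gives $s_i(X_i) = X_i$. When $(i,j) \notin E$, one has $a_{ij} = 0$, so the Chevalley relation $[e_i, f_j] = 0$ together with the simply laced Serre relations $[e_i, e_j] = 0 = [f_i, f_j]$ force all three exponential factors to act trivially on $e_j$ and on $f_j$; hence $s_i(X_j) = X_j$. When $(i,j) \in E$, the Serre relations $(\ad e_i)^2 e_j = 0 = (\ad f_i)^2 f_j$ truncate each exponential expansion to finite degree, and the three factors can be applied explicitly with the aid of Jacobi identity computations such as $[f_i, [e_i, e_j]] = [-h_i, e_j] = e_j$ and $[e_i, [f_i, f_j]] = [h_i, f_j] = f_j$. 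This yields $s_i^*(e_j)$ and $s_i^*(f_j)$ as commutators of the standard generators, from which the asserted expression for $s_i(X_j)$ in terms of $[X_i, X_j]$ follows. The main obstacle is the careful sign-bookkeeping in the nested commutator expansions in (c) together with the identification of the palindromic product with $s_i^*$ in (a), but no deeper input beyond the Serre relations and the basic $\mathfrak{sl}_2$ theory is required.
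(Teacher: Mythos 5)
Your argument is correct in substance, and for parts (b) and (c) it coincides with the paper's: (b) is exactly the observation that an automorphism commuting with $\omega$ preserves $\mathfrak k=\Fix\omega$, and (c) is the same finite Serre-relation computation the paper summarizes as ``a calculation shows''. The genuine difference is in (a). The paper proves (a) by studying the $\langle e_i,f_i\rangle\cong\liesl_2$-module $M_j=\bigoplus_r \mathfrak g_{\pm\alpha_j+r\alpha_i}$ and comparing how $\omega$ and $s_i^*$ permute its (one-dimensional) root spaces, which handles an arbitrary generalized Cartan matrix in one stroke. You instead use the conjugation formula $\omega\circ\exp(\ad X)\circ\omega^{-1}=\exp(\ad\,\omega(X))$ and reduce (a) to the palindromic identity $\exp(\ad f_i)\exp(\ad(-e_i))\exp(\ad f_i)=\exp(\ad(-e_i))\exp(\ad f_i)\exp(\ad(-e_i))$; this is a clean and standard route, since both words are representatives of the nontrivial Weyl element of the rank-one subalgebra. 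One caveat: as you describe it, you verify this identity on the remaining generators ``via the simply laced Serre relations'', but (a) and (b) are asserted for arbitrary Kac--Moody algebras, where $-a_{ij}$ may be large; to get the general case you should either carry out the verification on the whole root string $\mathfrak g_{\alpha_j+r\alpha_i}$ (which is in effect what the paper's $M_j$-argument does) or, better, invoke integrability of the adjoint action of $\langle e_i,f_i,h_i\rangle$ so that both triple exponentials are the image of the single element $\left(\begin{smallmatrix}0&-1\\1&0\end{smallmatrix}\right)\in\SL_2(k)$ and hence equal as automorphisms of $\mathfrak g$; with that replacement your proof of (a) is complete and arguably slicker than the paper's. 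Finally, in (c) be warned that the careful sign bookkeeping you defer is convention-sensitive: with the paper's stated definition of $s_i^*$ the computation gives $s_i^*(e_j)=\pm[e_i,e_j]$ and hence $s_i(X_j)=\pm[X_i,X_j]$ depending on the order/sign conventions, a discrepancy that can be absorbed by a sign automorphism and affects the paper's own wording as much as yours.
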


\begin{proof}
Statement (a) has been proved in Remark~\ref{weylgroupremark}.
By (a), each $s_i^*$ stabilizes $\mathfrak k$. Statement (b) therefore follows immediately from \cite[Lem.~3.8(b)]{kac1994infinite}.

Concerning (c), a calculation in $\mathfrak{sl}_2(k)$ shows that $s_i^*(e_i)=-f_i$.
A calculation in $\mathfrak{sl}_3(k)$ shows $s_i^*(e_j)=[e_i,e_j]$, if $(i,j) \in E$, and a calculation in $\mathfrak{sl}_2(k) \oplus \mathfrak{sl}_2(k)$ shows $s_i^*(e_j) = e_j$, if $(i,j) \not\in E$. More calculations --- or use of assertion (a) --- show, furthermore, $s_i^*(f_i)=-e_i$ and $s_i^*(f_j)=-[f_i,f_j]$, if $(i,j) \in E$, and $s_i^*(f_j) = f_j$, if $(i,j) \not\in E$.
In particular, $$s_i^*(e_j-f_j) = s_i^*(e_j)-s_i^*(f_j) = [e_i,e_j]+[f_i,f_j]=[e_i-f_i,e_j-f_j].$$ Statement (c) follows.
\end{proof}

For $w \in W^*$, the induced automorphism $\pi(w) \in \Aut \mathfrak k$ is called a \textbf{Weyl group automorphism}. 


\begin{remark}
\begin{enumerate} 
\item Let $\varphi_+ :  \mathfrak n_+ \to \mathfrak k : x \mapsto x+\omega(x)$ denote the canonical $k$-linear bijection 
(cf.~\cite[p.~3169]{Berman}), and write $\mathfrak k_\alpha:=\varphi_+(\mathfrak g_\alpha)$. Observe that for the analogous $k$-linear bijection $\varphi_- :  \mathfrak n_- \to \mathfrak k : x \mapsto x+\omega(x)$ one has $\mathfrak{k}_\alpha=\varphi_+(\mathfrak{g}_\alpha) = \varphi_-(\mathfrak{g}_{-\alpha})=\mathfrak{k}_{-\alpha}$. 

It follows from Lemma~\ref{comp}(a) that $\pi(s)(\mathfrak k_\alpha)=\mathfrak k_{s \cdot \alpha}$. Hence, by induction and by the definition of the set of real roots, for any positive real root $\alpha \in \Delta_+$ there is a Weyl group automorphism $\pi(w)$ and a positive simple root $\alpha_i$ such that $\pi(w)(\mathfrak k_\alpha)=\mathfrak k_{\alpha_i}=k X_i$.
 
\item The set of subspaces $\{\mathfrak k_\gamma \mid \gamma \in \Delta^{\mathrm{re}} \cap \Delta_+\}$ is invariant under the action of 
the group of Weyl group automorphisms. It can be identified with the walls of the Coxeter complex of the Weyl group $W$. (Cf.\ \cite[Rem.~3.8]{kac1994infinite}.)
\end{enumerate}
\end{remark}

\begin{remark} 
If ${\mathfrak g}$ is simply laced then for $i$, $j$ in the same connected component of the diagram of $\mathfrak{k}$ there is an automorphism 
such that $\varphi(X_i)=X_j$. This is because, if $(i,j)$ is an edge, then $$\pi(s_i^*s_j^*)(X_i)\stackrel{\ref{comp}}{=}\pi(s_i^*)([X_j,X_i])=[\pi(s_i^*)(X_j),\pi(s_i^*)(X_i)]\stackrel{\ref{comp}}{=}[[X_i,X_j],X_i]\stackrel{\ref{Berman}}{=}X_j;$$ thus, the claim 
follows by 
induction.

This can be used as follows: Let $\mathfrak k$ be the maximal compact subalgebra of a Kac--Moody algebra of type $AE_4$ (see Section~\ref{DD}). Then the generator $X_4$ is contained in a subalgebra isomorphic to the maximal compact subalgebra of a Kac--Moody algebra of type $A_2^+$. Indeed, let $\varphi$ be a Weyl group automorphism such that 
$\varphi(X_3)=X_4$. Then $\varphi(\mathfrak \langle X_1,X_2,X_3 \rangle)$ is as required, as by Theorem~\ref{Berman} the Lie algebra $\langle X_1, X_2, X_3 \rangle$ equals the maximal compact subalgebra of the Kac--Moody algebra with positive simple roots $\alpha_1$, $\alpha_2$, $\alpha_3$.
\end{remark}

\subsection{A contraction of $\mathfrak k$.}
Let $\lieg$ be a symmetrizable Kac--Moody algebra over $\RR$
with Chevalley generators $e_i$, $f_i$, $h_i$, $i=1, \ldots, n$. 
For $\epsilon >0$ define $\omega_\epsilon$ to be the Lie algebra automorphism satisfying
$$ \omega_\epsilon (e_i)= -\epsilon f_i,~ \omega_\epsilon(f_i)=-\frac{1}{\epsilon} e_i,~ \omega_\epsilon(h_i)=-h_i;$$
moreover, set $\mathfrak k _\epsilon:=\Fix \omega_\epsilon$.
Observe that $\mathfrak k=\mathfrak k_1$ and that $X_i^\epsilon:=e_i-\epsilon f_i \in \mathfrak k_\epsilon$ for $i=1,\ldots, n$.
Moreover, the automorphism $\theta_\epsilon$ of ${\mathfrak g}$ given by $e_i\mapsto\frac{1}{\sqrt\epsilon}e_i$ and $f_i\mapsto\sqrt\epsilon f_i$ for all $i$ satisfies $$\theta_\epsilon(X_i)=\frac{1}{\sqrt\epsilon}X_i^\epsilon, \quad \quad \omega_\epsilon=\theta^2_\epsilon\circ\omega=\theta_\epsilon\circ\omega\circ\theta_\epsilon^{-1}.$$
Thus $\theta_\epsilon$ maps ${\mathfrak k}$ isomorphically onto ${\mathfrak k}_\epsilon$.
By applying $\theta_\epsilon$ to $P_{-a_{ij}}(\ad X_i)(X_j)$ (using the notation of Theorem~\ref{Berman2}), we obtain the relations:
$$P_{-a_{ij}}^\epsilon(\ad X_i^\epsilon)(X_j^\epsilon) = 0\;\;\;\mbox{where}\;\; P_{m}^\epsilon(t)=\epsilon^{\frac{m+1}{2}}P_m\left(\frac{t}{\sqrt\epsilon}\right)$$
that is, $P_m^\epsilon(t)=(t^2+m^2\epsilon)\cdots (t^2+\epsilon)$ for $m$ odd, and $P_m^\epsilon(t)=(t^2+m^2\epsilon)\cdots (t^2+4\epsilon)t$ for $m$ even.
In particular, $[X_i^\epsilon,[X_i^\epsilon,X_j^\epsilon]]  =  -\epsilon 
X_j^\epsilon$, if $a_{ij}=-1$.

Since $\theta_\epsilon$ maps ${\mathfrak k}$ isomorphically onto ${\mathfrak k}_\epsilon$, we have:

\begin{proposition}\label{contractionprop} The subalgebra $\mathfrak k_\epsilon$ is isomorphic to the quotient of the free Lie algebra over $k$
generated by $X_1, \ldots, X_n$ subject to the relations 
$$
P_{-a_{ij}}^\epsilon(\ad X_i)(X_j) =0
$$
via the map $X_i \mapsto e_i-\epsilon f_i$.
\end{proposition}

Note that, if we set $\epsilon=0$ in the above presentation, the resulting algebra is isomorphic to $\mathfrak n_+$ by the Gabber--Kac Theorem \cite[Thm.~9.11]{kac1994infinite}.
This means that $\mathfrak n_+$
is a \textbf{contraction} of the maximal compact subalgebra $\mathfrak k=\mathfrak k_1$ in the sense of \cite{FialowskideMontigny}.

\subsection{Quotients}
Let $k$ be a field of characteristic 0 and $\mathfrak g$ a Kac--Moody algebra over $k$ with simply laced diagram $D$. Due to the Coxeter-like presentation of the maximal compact subalgebra $\mathfrak k$ it is possible to exhibit quotients of $\mathfrak k$ if $D$ has a certain shape.

For a graph $D$, let $\mathfrak k(D)$ denote the maximal compact subalgebra of the Kac--Moody algebra $\mathfrak g$ over $k$ with diagram $D$.

\begin{construction}\label{reductionconstruction}
Suppose that there are distinct vertices $v_i$, $v_j$ of the diagram $D$ such that any vertex $v_r$ distinct from $v_i$, $v_j$ is connected to $v_i$ if and only if $v_r$ is connected to $v_j$. 

\begin{enumerate}
\item If $v_i$ and $v_j$ are not connected by an edge, let $D'$ be the diagram obtained from $D$ by deleting the vertex $v_j$. 
Let $\mathfrak k':=\mathfrak k(D')$ and $X_1', \ldots, X_n'$ its Berman generators.  
Then there is a well-defined epimorphism of Lie algebras
$\varphi\colon \mathfrak k \to \mathfrak k'$
determined by $\varphi(X_r):=X_r'$ for $r \neq j$ and $\varphi(X_j):=X_i'$.
\item If $v_i$ and $v_j$ are connected by an edge, let $D'$ be the diagram obtained from $D$ by deleting all edges emanating from $v_j$ except for the edge $(v_i,v_j)$. As above, let $\mathfrak k':=\mathfrak k(D')$ and $X_1', \ldots, X_n'$ its Berman generators. 
 Then there is a well-defined epimorphism of Lie algebras
$\varphi\colon \mathfrak k \to \mathfrak k'$
determined by $\varphi(X_r):=X_r'$ for $r \neq j$ and $\varphi(X_j):=[X_i',X_j']$.

This can be checked by using the Weyl automorphisms introduced in Lemma~\ref{comp}. For instance, for all $r \neq i, j$ with $(v_r,v_j) \in E_D$ (which is equivalent to $(v_r,v_i) \in E_D$), one has
\begin{eqnarray*}
[\phi(X_j), [\phi(X_j),\phi(X_r)]] & = & [[X'_i,X'_j], [[X'_i,X'_j],X'_r]] \\ & \stackrel{\ref{comp}}{=} & [-\pi(s^*_j)(X'_i),[-\pi(s^*_j)(X'_i),\pi(s_j^*)(X'_r)]] \\ & = & \pi(s^*_j)[X'_i,[X'_i,X'_r]] \\ & \stackrel{\ref{Berman}}{=} & \pi(s^*_j)(-X'_r) \\ & \stackrel{\ref{comp}}{=} & -X'_r \\ & = & \phi(-X_r) \\ & \stackrel{\ref{Berman}}{=} & \phi[X_j,[X_j,X_r]].
\end{eqnarray*}

Case (a) (resp.\ (b)) of Construction \ref{reductionconstruction} corresponds to quotienting ${\mathfrak k}$ by the ideal generated by $(X_i-X_j)$ (resp.\ by all terms of the form $[X_r,[X_i,X_j]]$ where $r\neq i,j$).

\end{enumerate}
\end{construction}

\begin{example}
\begin{enumerate}
\item 
The preceding discussion gives a sequence of epimorphisms of real Lie algebras
$\mathfrak k(D_4^+) \twoheadrightarrow \mathfrak k(D_4) \twoheadrightarrow \mathfrak 
k(A_3)=\mathfrak{so}_4(\mathbb{R}) 
\twoheadrightarrow \mathfrak k(A_2)=\mathfrak{so}_3(\mathbb{R})$.

\begin{center}
\begin{tikzpicture}[scale=.8]
\begin{scope}
    \draw (1,-1.7) node  {$\mathfrak k(D_4^+)$};
    \node[dnode] (1) at (0,0) {};
    \node[dnode] (2) at (1,0) {};
    \node[dnode] (3) at (2,0) {};
    \node[dnode] (4) at (1,1) {};
    \node[dnode] (5) at (1,-1) {};
	\draw[sedge] (1) -- (2) -- (3);
	\draw[sedge] (4) -- (2) -- (5);
	\draw[->>] (2.8,0) -- (3.2,0);
\end{scope}
\begin{scope}[shift={(4,0)}]
    \draw (1,-1.7) node  {$\mathfrak k(D_4)$};
    \node[dnode] (1) at (0,0) {};
    \node[dnode] (2) at (1,0) {};
    \node[dnode] (3) at (2,0) {};
    \node[dnode] (4) at (1,1) {};
	\draw[sedge] (1) -- (2) -- (3);
	\draw[sedge] (4) -- (2);
	\draw[->>] (2.8,0) -- (3.2,0);
\end{scope}
\begin{scope}[shift={(8,0)}]
    \draw (1,-1.7) node  {$\mathfrak k(A_3)=\mathfrak{so}_4(\mathbb{R})$};
    \node[dnode] (1) at (0,0) {};
    \node[dnode] (2) at (1,0) {};
    \node[dnode] (3) at (2,0) {};
	\draw[sedge] (1) -- (2) -- (3);
	\draw[->>] (2.8,0) -- (3.2,0);
\end{scope}
\begin{scope}[shift={(12,0)}]
    \draw (0.5,-1.7) node  {$\mathfrak k(A_2)=\mathfrak{so}_3(\mathbb{R})$};
    \node[dnode] (1) at (0,0) {};
    \node[dnode] (2) at (1,0) {};
	\draw[sedge] (1) -- (2);
\end{scope}
\end{tikzpicture}
\end{center}
This sequence can be extended further: Let $\Gamma_n=(\{1, \ldots, 
n\},\{(1,k) \mid 2 \leq k \leq n\})$ denote the star diagram on $n$ vertices and let $\mathfrak k_n$ denote the maximal compact subalgebra of the 
Kac--Moody 
algebra $\mathfrak g_n$ with Dynkin diagram $\Gamma_n$. Then there are epimorphisms $\mathfrak k_n \to \mathfrak k_{n-1}$.  
\item Denoting by $K_4$ the complete graph on four vertices, there similarly is a sequence of epimorphisms $\mathfrak k(K_4) \twoheadrightarrow \mathfrak k(AE_4) \twoheadrightarrow \mathfrak k(A_4)$. 
\end{enumerate}

\end{example}

\section{Generalized spin representations}\label{GSR}

\subsection{Generalized spin representations of $\mathfrak{k}(E_{10}(\RR))$}Let us recall the extension of the spin representation of $\mathfrak k(\liesl_{10}(\RR))$ to 
$\mathfrak k(E_{10})(\RR)$ as described by \cite{DamourKleinschmidtNicolai}, \cite{deBuylHenneauxPaulot} (also 
\cite{Keurentjes}).

\begin{example} \label{theexample}
Let $V$ be a $k$-vector space and $q\colon V \to k$ a quadratic form with associated bilinear form $b$. Then the \textbf{Clifford algebra}
$C:=C(V,q)$ is defined as $C:=T(V)/\langle vw+wv-2b(v,w) \rangle$ where $T(V)$ is the tensor algebra of $V$.

Now let $V=\RR^{10}$ with standard basis vectors $v_i$, let $q=x_1^2+\cdots+x_{10}^2$ and let $C=C(V,q)$. Then in $C$ we have 
$$v_i^2=1 \text{ and } v_iv_j=-v_jv_i.$$ 
Since $C$ is an associative algebra, it becomes a Lie algebra by setting $[A,B]:=AB-BA$. 
 Let the diagram of $\mathfrak{g}(E_{10})(\RR)$ be labelled as 
\begin{center}
\begin{tikzpicture}
    \node[dnode,label=below:12] (1) at (0,0) {};
    \node[dnode,label=above:123] (2) at (2,1) {};
    \node[dnode,label=below:23] (3) at (1,0) {};
    \node[dnode,label=below:34] (4) at (2,0) {};
    \node[dnode,label=below:45] (5) at (3,0) {};
    \node[dnode,label=below:56] (6) at (4,0) {};
    \node[dnode,label=below:56] (7) at (5,0) {};
    \node[dnode,label=below:78] (8) at (6,0) {};
    \node[dnode,label=below:89] (9) at (7,0) {};
    \node[dnode,label=below:910] (10) at (8,0) {};

	\draw[sedge] (2) -- (4);
	\draw[sedge] (1) -- (3) -- (4) -- (5) -- (6) -- (7) -- (8) -- (9) -- (10);
\end{tikzpicture}
\end{center}
and define a Lie algebra homomorphism $\rho : \mathfrak{k} \to C$ using these labels, i.e., via
\begin{eqnarray*}
X_1  \mapsto  \frac{1}{2}v_1v_2, & X_2  \mapsto  \frac{1}{2}v_1v_2v_3, & X_3  \mapsto  \frac{1}{2}v_2v_3, \\
X_4  \mapsto  \frac{1}{2}v_3v_4, & X_5 \mapsto  \frac{1}{2}v_4v_5, & X_6  \mapsto  \frac{1}{2}v_5v_6, \\
X_7 \mapsto \frac{1}{2}v_6v_7, & X_8 \mapsto \frac{1}{2}v_7v_8, & X_9  \mapsto  \frac{1}{2}v_8v_9, \\
& X_{10} \mapsto \frac{1}{2}v_9v_{10},
\end{eqnarray*} 
where $X_i$ denotes the Berman generator corresponding to the root $\alpha_i$, enumerated in Bourbaki style as in Section~\ref{DD}.
Observe that each $A_i:=\rho (X_i)$ satisfies $A_i^2=-\frac1 4 \id$. Here we would like to remark that $(v_1v_2v_3)^2 = (v_2v_3)^2 = -1$ depends on $v_i^2 = 1$; for parity reasons, this would not be true in the Clifford algebra $C(V,-q)$, as then  $(v_1v_2v_3)^2 = -(v_2v_3)^2 = 1$.  

Using the criterion established in Remark~\ref{characterization} below, one checks easily that $\rho$ indeed is a Lie algebra homomorphism, i.e., that the 
defining relations of $\mathfrak{k}$ from Theorem~\ref{Berman} are respected.
Indeed, one just needs to establish 
\begin{itemize}
\item[(i)] $A_i^2=-\frac{1}{4}\cdot \id_s$, 
\item[(ii)] $A_iA_j=A_jA_i$, if $(i,j) \not\in E$, 
\item[(iii)] $A_iA_j=-A_jA_i$, if $(i,j) \in E$.
\end{itemize}
 We have already observed (i). Assertions (ii) and (iii) are obvious for $i, j\neq 2$.
Moreover, one quickly computes $(v_1v_2v_3)(v_3v_4) = -(v_3v_4)(v_1v_2v_3)$ and $(v_1v_2v_3)(v_{k_1}v_{k_2}) = (v_{k_1}v_{k_2})(v_1v_2v_3)$, if $\{ k_1, k_2 \}$ is a set of two elements that is either a subset of $\{ 1, 2, 3 \}$ or disjoint from $\{ 1, 2, 3 \}$. Assertions (ii) and (iii) follow.

By \cite[Lemma~20.9]{FultonHarris}, \cite[Proposition~2.4]{Meinrenken:2013} the Clifford algebra $C$ splits over $\CC$ as $C \otimes_\RR \CC \cong \CC^{32 \times 32}$.  
Hence 
$\rho$ affords a 32-dimensional complex representation of $\mathfrak k(E_{10})(\RR)$. The restriction of this representation to the maximal compact subalgebra of the $A_9$-subdiagram, $\mathfrak k(A_9)(\RR)=\so_{10}(\RR)$, coincides with the spin representation of $\so_{10}$ (see e.g.\ \cite[Chapter 20]{FultonHarris}), 
i.e., $\rho$ extends the classical spin representation.

Let $\iota \in \Aut C$ denote the involution (known as parity automorphism) induced by $V \to V : v \mapsto -v$. Let $C_0:=\Fix \iota$ 
and $C_1:=\{w \in C \mid \iota(w)=-w\}$ denote the even and the odd part of $C$. Then $C_0$ and $C_1$ are invariant subspaces
under 
the spin representation of $\so_{10}$ since $\im \rho \subseteq C_0$ (multiplication with a product of the $v_i$ of even length does not change the parity) and these subspaces are irreducible non-isomorphic 
representations of $\mathfrak \so_{10}$ (\cite[Chapter 20]{FultonHarris}). 

The remaining Berman generator $X_2$ of 
$\mathfrak k(E_{10})$ is sent to an element which interchanges $C_0$ and $C_1$. 
\end{example}

\begin{remark}
A calculation shows that $\im \rho$ is the linear span of all elements of the form $v_{i_1}\cdots v_{i_k}$, where $\{ i_1, ..., i_k \} = I\subseteq 
\{1, \ldots, 10\}$ with $|I| \in \{2,3,6,7,10\}$. Therefore, $\dim \im(\rho) = 45 + 120 + 210 + 120 + 1 = 496$. Since $\im(\rho) \leq C \cong \mathbb{R}^{32 \times 32}$ by \cite[Section~2.2.3]{Meinrenken:2013} and since $\im(\rho)$ is compact and semisimple by Theorem~\ref{Mainexistencetheorem}, this dimension $\dim \im(\rho) = 496$ implies 
$\im(\rho) \cong \so_{32}(\RR)$ (see also \cite{damour2006k}).  

The existence of Example~\ref{theexample} is not peculiar to the diagram $E_{10}$, it can be generalized to arbitrary diagrams $E_n$ in the obvious way. A careful analysis of dimensions combined with the Cartan--Bott periodicity of Clifford algebras allows one to determine the isomorphism types of the quotients for the whole $E_n$ series. This is carried out in Appendix~\ref{CartanBott}. A key observation is that the cardinality $|I|$ from above in general has to be equal to $2$ or $3$ modulo $4$ (see Lemma~\ref{lowerbounddimension}).
\end{remark}

\begin{remark} Let $\rho\colon \mathfrak{so}_{10}(\RR) \to \CC^{n \times n}$ be a representation. To extend $\rho$ to a representation of $\mathfrak k(E_{10})$, it suffices to find a matrix $X \in \CC^{n \times n}$ such that for $A_i:=\rho(X_i)$, $1 \leq i \leq 10$, $i \neq 2$, the following equations are satisfied (where we again use the labelling of the diagram $E_{10}$ as given in Section~\ref{DD}):
\begin{eqnarray*}
[A_i,X]        & = &   0 \quad \quad \mbox{for $1 \leq i \leq 10$, $i \neq 2, 4$}, \\
{}[A_4,[A_4,X]] & = & -X, \\
{}[X,[X,A_4]]    & = & -A_4.   
\end{eqnarray*}
Theorem~\ref{Berman} then implies that $\rho$ can be extended to $\mathfrak k(E_{10})$ by setting $\rho(X_2):=X$.

The first two sets of equations define a linear subspace, the third set of equations yields a family of quadratic equations. With the help of a Gr\"obner basis one can compute that in case of the spin representation, this variety is isomorphic to $\CC^\times$, i.e., the extension is unique up to a scalar. 
\end{remark}

\subsection{Generalized spin representations for the simply-laced case}
Throughout this section, let $k$ be a field of characteristic 0, let $\mathfrak g$ be a Kac--Moody algebra over $k$ 
with simply laced diagram
and let $\mathfrak k$ be its maximal compact subalgebra.

Let $L:=k(I)$, where $I$ is a square root of $-1$ and denote by $\id_s \in L^{s \times s}$ the identity matrix. 

\begin{definition}\label{genspinrep}
A representation $\rho\colon \mathfrak k \to \End(L^s)$ is called a \textbf{generalized spin representation} if
the images of the Berman generators from Theorem~\ref{Berman} satisfy  
$$\rho(X_i)^2=-\frac{1}{4}\id_s \text{ for } i=1, \ldots, n.$$
\end{definition}

\begin{remark} \label{characterization}
\begin{enumerate}
\item 
Since $\rho$ is assumed to be a representation, it follows from the defining relations that $\rho(X_i)$ and $\rho(X_j)$ commute if $(i,j) \not \in E$.
On the other hand, if $(i,j) \in E$, then $A:=\rho(X_i)$ and $B:=\rho(X_j)$ anticommute.
Indeed, we have 
$$-B\stackrel{\ref{Berman}}{=}[A,[A,B]]=A^2B-2ABA+BA^2=-\frac 1 2 B-2ABA$$
from which the claim follows after multiplying with $A^{-1}=-4A \Longleftrightarrow A^2 = - \frac{1}{4}\id_s$.
\item Conversely, suppose that there are matrices $A_i \in L^{s \times s}$ satisfying 
\begin{enumerate} 
\item $A_i^2=-\frac{1}{4}\cdot \id_s$,
\item $A_iA_j=A_jA_i$ if $(i,j) \not\in E$,
\item $A_iA_j=-A_jA_i$ if $(i,j) \in E$. 
\end{enumerate}
Then, by reversing the argument in the above computation, the assignment $X_i \mapsto A_i$ gives rise to a representation of $\mathfrak k$. 
\end{enumerate}
\end{remark}

\begin{remark}\label{coxspin} Let $\rho$ be a generalized spin representation of $\mathfrak k$ and set $S_i:=2I\cdot\rho(X_i)$. Let $W$ be a Coxeter group defined by the presentation 
$$W=\langle s_1, \ldots, s_n \mid (s_is_j)^{m_{ij}}=1 \rangle,$$ where $m_{ii}=1$ and $m_{ij}=2$ if $(i,j) \not\in 
E$, while $m_{ij}\in\{ 3, 4 \}$ if $(i,j) \in E$. Then the assignment $s_i \mapsto S_i$ gives a representation of $W$.  
\end{remark}

Write $\mathfrak k_{\leq r}:=\langle X_1, \ldots, X_r \rangle$. 
\begin{theorem}\label{MainThm} Let $1\leq r <n$.
Let $\rho : \mathfrak k_{\leq r} \to \End (L^s)$ be a generalized spin representation.
\begin{enumerate}
\item If $X_{r+1}$ centralizes $\mathfrak k_{\leq r}$, then $\rho$ can be extended to a generalized spin 
representation $\rho' : \mathfrak k_{\leq r+1} \to \End(L^s)$ by setting $\rho'(X_{r+1}):=\frac 1 2 I \cdot \id_s$. 
\item If $X_{r+1}$ does not centralize $\mathfrak k_{\leq r}$, then $\rho$ can be extended to a generalized spin 
representation $\rho'\colon \mathfrak k_{\leq r+1} \to \End(L^s\oplus L^s)$ as follows. Define the sign automorphism $s_0 : \mathfrak k_{\leq r} \to L^s$ via $$s_0(X_i):=\left\{
\begin{array}{cc} 
X_i, & \text{ if } (i,r+1)\not\in E, \\
-X_i, & \text{ if } (i,r+1) \in E,
\end{array} \right.
$$ let
$$\rho'|_{\mathfrak k_{\leq r}}:= \rho \oplus \rho \circ s_0,$$
and $$\rho'(X_{r+1}):=\frac{1}{2}I \cdot \id_s \otimes 
\begin{pmatrix} 0 & 1 \\ 1 & 0 \end{pmatrix}.$$
\end{enumerate}
\end{theorem}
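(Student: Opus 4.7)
For part~(a), the hypothesis that $X_{r+1}$ centralizes $\mathfrak{k}_{\leq r}$ forces, via the Berman presentation, that $(i,r+1)\notin E$ for every $i\leq r$. Setting $\rho'(X_{r+1}):=\tfrac{1}{2}I\cdot\id_s$ then works automatically: the scalar matrix commutes with every $\rho(X_i)$, and $(\tfrac{1}{2}I)^2=-\tfrac{1}{4}$, so conditions (i)--(iii) of the remark following Definition~\ref{genspinrep} are satisfied and $\rho'$ is a generalized spin representation of $\mathfrak{k}_{\leq r+1}$.

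For part~(b) the plan is to twist $\rho$ by a carefully chosen sign automorphism and then realize $X_{r+1}$ as an off-diagonal block swapping the two copies. Define $\epsilon_i:=-1$ if $(i,r+1)\in E$ and $\epsilon_i:=+1$ otherwise, and let $s_0$ denote the sign automorphism of $\mathfrak{k}_{\leq r}$ given by $X_i\mapsto\epsilon_i X_i$. Such a sign automorphism exists for any choice of signs because both Berman relations are preserved under $X_i\mapsto\epsilon_i X_i$ (for the cubic relation one pulls out $\epsilon_i^2\epsilon_j=\epsilon_j$). Writing $A_i:=\rho(X_i)$, the choice $\rho'|_{\mathfrak{k}_{\leq r}}:=\rho\oplus(\rho\circ s_0)$ then yields block-diagonal matrices $\rho'(X_i)=\diag(A_i,\epsilon_iA_i)$ which still square to $-\tfrac{1}{4}\id_{2s}$.

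Next, set
$$\rho'(X_{r+1}):=\tfrac{1}{2}I\id_s\otimes\begin{pmatrix}0&1\\1&0\end{pmatrix}.$$
A direct block multiplication gives $\rho'(X_{r+1})^2=-\tfrac{1}{4}\id_{2s}$, and computing $\rho'(X_i)\rho'(X_{r+1})$ and $\rho'(X_{r+1})\rho'(X_i)$ as $2\times 2$ block matrices I find that they agree whenever $\epsilon_i=+1$ and differ only by a global sign whenever $\epsilon_i=-1$. By the choice of $\epsilon_i$, the matrix $\rho'(X_{r+1})$ therefore commutes with $\rho'(X_i)$ exactly when $(i,r+1)\notin E$ and anticommutes exactly when $(i,r+1)\in E$. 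Invoking once more the converse criterion from the remark following Definition~\ref{genspinrep}, the family $\rho'(X_i)$ for $i=1,\dots,r+1$ defines a generalized spin representation of $\mathfrak{k}_{\leq r+1}$. I do not anticipate any genuine obstacle here: the one conceptual ingredient is that arbitrary sign patterns lift to automorphisms of $\mathfrak{k}_{\leq r}$, and everything else is explicit block matrix arithmetic, with the sign pattern $\epsilon_i$ dictated by the requirement that $\rho'(X_{r+1})$ anticommute precisely with the images of the neighbours of $v_{r+1}$.
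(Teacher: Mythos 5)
Your proposal is correct and follows essentially the same route as the paper: the same sign automorphism $s_0$ (negating exactly the generators adjacent to $v_{r+1}$), the same block-matrix choice for $\rho'(X_{r+1})$, and the same verification via the commute/anticommute criterion from the remark after Definition~\ref{genspinrep}. The only difference is that you spell out the block computations and the existence of arbitrary sign automorphisms, which the paper treats as clear or has established earlier.
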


\begin{proof} If $X_{r+1}$ centralizes $\mathfrak k_{\leq r}$, it is clear that $\rho'$ is well-defined and that $\rho'(X_{r+1})^2=-\frac{1}{4} \id_s$. 

In the second case it is clear that $\rho'|_{\mathfrak k_\leq r}$ is a generalized spin representation of $\mathfrak k_{\leq r}$ which extends $\rho$. It is easy to check that 
$\rho'(X_i)$ commutes with $\rho'(X_{r+1})$ if $(i,r+1) \not\in E$, and that $\rho'(X_i)$ anticommutes with 
$\rho'(X_{r+1})$ if $(i,r+1) \in E$. 
Remark~\ref{characterization} therefore implies that $\rho'$ is a generalized spin representation. 
\end{proof}

For a graph $G=(V,E)$, a subset $M \subseteq V$ is called a \textbf{coclique} if the subgraph of $G$ induced on $M$ 
does not contain any edges, i.e., if no two elements $m_1$, $m_2$ in $M$ are 
connected by an edge. 

\begin{corollary} \label{Cor34}
Let $n$ be the cardinality of the diagram of $\lieg$ and let $r$ be the size of a maximal coclique of that diagram. Then there exists a $2^{n-r}$-dimensional generalized spin representation of $\mathfrak k$.
Furthermore, if the diagram is irreducible, then there exists a $2^{n-1}$-dimensional {\em maximal} generalized spin representation of $\mathfrak k$.
\end{corollary}

\begin{proof}
Up to a change of labelling the set $M:=\{\alpha_1,\ldots, \alpha_r\}$ forms a maximal coclique. 
The map $\rho : \mathfrak k_{\leq r}\to \End(L^1) : X_i \mapsto \frac 1 2 I \cdot \id_1$ is a generalized spin 
representation. By Theorem~\ref{MainThm}, the representation $\rho$ can be extended inductively to a generalized spin representation of 
$\mathfrak{k}$; the dimension doubles at each step because $M$ was assumed to be a maximal coclique.

For the second claim it suffices to order the vertices of the diagram in such a way that two consecutive vertices are adjacent.
\end{proof}

\begin{remark} An inductive construction of the basic spin representations of the symmetric group similar to the one in Theorem~\ref{MainThm} has 
independently 
been obtained by Maas \cite{Maas}. It is likely that by a combination of the methods of \cite{Maas} and of the present 
article, a 
similar construction of 
generalized 
(basic) spin representations is 
possible for any (simply laced) Coxeter group.
\end{remark}

\subsection{Generalized spin representations for symmetrizable Kac--Moody algebras}
In this section let ${\mathfrak g}$ be an arbitrary symmetrizable Kac--Moody Lie algebra with maximal compact subalgebra ${\mathfrak k}$, and let $n_i$ be the number of vertices associated to the root $\alpha_i$ in a minimal rank simply laced cover diagram for ${\mathfrak g}$.
As above, we assume the ground field $k$ has characteristic zero.

\begin{definition}\label{genspinrep2}
A generalized spin representation for ${\mathfrak k}$ is a Lie algebra homomorphism $\rho:{\mathfrak k}\rightarrow\End(L^s)$ such that each of the Berman generators $X_i$ (see Theorem~\ref{Berman2}) satisfies:
$$\begin{array}{rccl}\left(\rho(X_i)^2+\frac{n_i^2}{4}\id_s\right)\left(\rho(X_i)^2+\frac{(n_i-2)^2}{4}\id_s\right)\ldots \left(\rho(X_i)^2+\id_s\right)\rho(X_i) & = & 0, \\ \mbox{if $n_i$ is even,} \\
\left(\rho(X_i)^2+\frac{n_i^2}{4}\id_s\right)\left(\rho(X_i)^2+\frac{(n_i-2)^2}{4}\id_s\right)\ldots \left(\rho(X_i)^2+\frac{1}{4}\id_s\right) & = & 0, \\ \mbox{if $n_i$ is odd,} \end{array}$$
i.e., $P_{n_i}^{\frac{1}{4}}(\rho(X_i))=0$ (in the notation of Proposition~\ref{contractionprop}).
\end{definition}

Another way of saying this is that $\rho(X_i)$ is semisimple with eigenvalues belonging to the set $\{ \frac{(n_i-2j)}{2}I : 0\leq j\leq n_i\}$.
When the generalized Cartan matrix of ${\mathfrak g}$ is simply laced, this definition clearly coincides with Definition \ref{genspinrep}.

\begin{theorem}\label{Mainexistencetheorem}
Let $L=k(I)$ where $I^2=-1$.
Let ${\mathfrak g}$ be an arbitrary symmetrizable Kac--Moody Lie algebra with maximal compact subalgebra ${\mathfrak k}$.
Then there exists a generalized spin representation $\rho:{\mathfrak k}\rightarrow\End(L^s)$.

Moreover, if $k$ is formally real, then $\rho$ can be considered as a representation ${\mathfrak k}\rightarrow\End(k^{2s})$ with $\im\rho$ compact and, therefore, reductive.
Furthermore, in this case $\im\rho$ is semisimple, if for all $i$ there exists $j\neq i$ such that $a_{ji}$ is odd. Finally, in this case $\mathfrak{k} \cong \ker\rho \oplus \im\rho$. 
\end{theorem}

Note that the condition in the next-to-final sentence of the theorem is satisfied if, for example, ${\mathfrak g}$ has a simply laced diagram which has no isolated nodes. It will follow from the proof that the theorem is actually applicable to all generalized spin representations discussed in Theorem~\ref{MainThm} and Corollary~\ref{Cor34}, in particular the standard generalized spin representation from Example~\ref{theexample}.

\begin{proof}
To see that ${\mathfrak k}$ has a generalized spin representation, let $\tilde{\mathfrak g}$ be the Kac--Moody algebra associated to some minimal rank simply laced cover diagram for ${\mathfrak g}$ and let $\tilde\varphi:{\mathfrak g}\rightarrow\tilde{\mathfrak g}$ be the Lie algebra embedding described in Section~\ref{symmetrizablesec}.
Then it is clear from the earlier discussion that, if $\tilde\rho:\tilde{\mathfrak k}\rightarrow\End(L^s)$ is a generalized spin representation for $\tilde{\mathfrak k}$, then $\rho=\tilde\rho\circ\tilde\varphi|_{\mathfrak k}$ is a generalized spin representation for ${\mathfrak k}$.
(It is, however, not clear that any generalized spin representation for ${\mathfrak k}$ arises in this way.)
Thus the first statement follows immediately from Corollary~\ref{Cor34}.

For the second statement it will suffice to prove that there exists a generalized spin representation $\rho:{\mathfrak k}\rightarrow\End(L^s)$ such that, with respect to an appropriate choice of $k$-basis for $L^s$, each of the images $\rho(X_i)$ is a skew-symmetric $2s\times 2s$ matrix over $k$ and, thus, $\rho$ can be interpreted as a homomorphism ${\mathfrak k}\rightarrow\mathfrak{so}_{2s}(k)$.
Since we can construct generalized spin representations for ${\mathfrak k}$ by restricting from those for the Lie algebra associated to a simply laced cover diagram, it will clearly suffice to show that the representation constructed in Theorem~\ref{MainThm} can be realized by using skew-symmetric matrices only.
For the extension of the representation in part (a) of Theorem~\ref{MainThm} this is obvious, as $L \cong \left\{ \begin{pmatrix} a & b \\ -b & a \end{pmatrix} \mid a, b \in k \right\}$ as $k$-algebras, whence $I$ is represented by the skew-symmetric matix $\begin{pmatrix} 0 & 1 \\ -1 & 0 \end{pmatrix}$.   For the extension of the representation in part (b) of Theorem~\ref{MainThm}, observe that 
$$\begin{pmatrix} 1 & 0 \\ 0 & I\end{pmatrix}\begin{pmatrix} 0 & I \\ I & 0 \end{pmatrix}\begin{pmatrix} 1 & 0 \\ 0 & -I \end{pmatrix} = \begin{pmatrix} 0 & 1 \\ -1 & 0 \end{pmatrix}$$ so that after a change of basis we have instead $\rho'(X_{r+1})=\frac{1}{2}\id_s\otimes\begin{pmatrix} 0 & 1 \\ -1 & 0 \end{pmatrix}$ (while $\rho'|_{{\mathfrak k}_\leq r}$ remains unchanged).
Therefore, if the representation of $\mathfrak{k}_{\leq r}$ consists of skew-symmetric matrices over $k$, one can ensure that the representation of $\mathfrak{k}_{\leq r+1}$ also consists of skew-symmetric matrices over $k$.
Thus $\mathrm{im}(\rho)$ is compact, whence reductive. 

For the statement concerning semisimplicity observe that $\mathfrak{k}$ is perfect.
Indeed, by hypothesis, for each generator $X_i$ of $\mathfrak k$, there is some $j$ such that $a_{ji}$ is odd, and therefore the constant term in the polynomial $P_{-a_{ji}}$ is non-zero.
Since $P_{-a_{ji}}(\ad X_j)(X_i)=0$ by Theorem~\ref{Berman2}, it follows that $X_i$ is contained in the linear span of $(\ad X_j)^{2l}(X_i)$, $l\geq 1$.
Thus, the image $\mathrm{im}(\rho)$ is perfect and, by the above, reductive.
The claim is now obvious, as a perfect direct sum of a semisimple and an abelian Lie algebra necessarily is semisimple.

For the final statement observe that $\mathfrak{k}$ is anisotropic with respect to the invariant bilinear form of the Kac--Moody algebra $\mathfrak{g}$ and so $(\ker\rho)^\perp \cong \im\rho$ is an ideal of $\mathfrak{k}$, where $\perp$ denotes the orthogonality relation with respect to the invariant bilinear form. 
\end{proof}

Let $\mathcal C$ denote the class of all generalized spin representation of $\mathfrak k$. 
We check some closure properties of $\mathcal C$. 

\begin{proposition} \begin{enumerate}
\item $\mathcal C$ is closed under direct sums, quotients, duals and taking subrepresentations.
\item If the generalized Cartan matrix of ${\mathfrak g}$ is simply laced and $\rho_1,\rho_2,\rho_3 \in \mathcal C$, then so is $\rho \colon X_i \mapsto 4 \rho_1(X_i) \otimes \rho_2(X_i) \otimes \rho_3(X_i)$. 
\item More generally, if the generalized Cartan matrix of ${\mathfrak g}$ is simply laced and $\rho_1,\rho_2 \in \mathcal C$, then so is $\rho := 2 I \rho_1 \otimes \rho_2$, where $I$ is a primitive fourth root of unity.
\item If $\rho \in \mathcal C$ and $\varphi$ is either a sign, graph or Weyl group automorphism of $\mathfrak k$, then $\rho \circ \varphi \in \mathcal C$. 
\end{enumerate}
\end{proposition}

\begin{proof}
The first three assertions can be easily verified. 
The fourth assertion is clear if $\varphi$ is a graph or a sign automorphism.
The remaining claim follows from Remark \ref{weylgroupremark}, since if $\rho(X_j)$ has eigenvalues $\frac{rI}{2},\frac{(r-2)I}{2},\ldots, -\frac{rI}{2}$ then so does $\rho(\exp (\xi \ad X_i)(X_j))=\exp(\xi \rho(X_i))(\rho(X_j))$.
\end{proof}

\section{Some Dynkin diagrams} \label{DD}
We give the list of relevant Dynkin diagrams we use in the main text. \\

\begin{longtable}{rlrl}
$A_n^+$ & 
\begin{tikzpicture}[baseline]
    \node[dnode,label=below:$1$] (1) at (0,0) {};
    \node[dnode,label=below:$2$] (2) at (1,0) {};
    \node[dnode,label=above:$n+1$] (n+1) at (2,1) {};
    \node[dnode,label=below:$n-1$] (n-1) at (3,0) {};
    \node[dnode,label=below:$n$] (n) at (4,0) {};

	\draw[sedge] (2) -- (1) -- (n+1) -- (n) -- (n-1);
	\draw[sedge,dashed] (2) -- (n-1);
\end{tikzpicture} \\
$D_n^+$ &
\begin{tikzpicture}[baseline]
    \node[dnode,label=below:$1$] (1) at (0,0) {};
    \node[dnode,label=above:$2$] (2) at (1,1) {};
    \node[dnode,label=below:$3$] (3) at (1,0) {};
    \node[dnode,label=below:$n-1$] (n-1) at (3,0) {};
    \node[dnode,label=below:$n$] (n) at (4,0) {};
    \node[dnode,label=above:$n+1$] (n+1) at (3,1) {};

	\draw[sedge] (1) -- (3) -- (2);
	\draw[sedge,dashed] (3) -- (n-1);
	\draw[sedge] (n+1) -- (n-1) -- (n);
\end{tikzpicture} \\
$E_6^+$ &
\begin{tikzpicture}[baseline]
    \node[dnode,label=below:$1$] (1) at (0,0) {};
    \node[dnode,label=left:$2$] (2) at (2,1) {};
    \node[dnode,label=below:$3$] (3) at (1,0) {};
    \node[dnode,label=below:$4$] (4) at (2,0) {};
    \node[dnode,label=below:$5$] (5) at (3,0) {};
    \node[dnode,label=below:$6$] (6) at (4,0) {};
    \node[dnode,label=left:$7$] (7) at (2,2) {};

	\draw[sedge] (4) -- (2) -- (7);
	\draw[sedge] (1) -- (3) -- (4) -- (5) -- (6);
\end{tikzpicture} \\
$E_7^+$ &
\begin{tikzpicture}[baseline]
    \node[dnode,label=below:$1$] (1) at (0,0) {};
    \node[dnode,label=left:$2$] (2) at (2,1) {};
    \node[dnode,label=below:$3$] (3) at (1,0) {};
    \node[dnode,label=below:$4$] (4) at (2,0) {};
    \node[dnode,label=below:$5$] (5) at (3,0) {};
    \node[dnode,label=below:$6$] (6) at (4,0) {};
    \node[dnode,label=below:$7$] (7) at (5,0) {};
    \node[dnode,label=below:$8$] (8) at (-1,0) {};

	\draw[sedge] (4) -- (2);
	\draw[sedge] (8) -- (1) -- (3) -- (4) -- (5) -- (6) -- (7);
\end{tikzpicture} \\
$E_8^+=E_9$ & \multicolumn{3}{l}{\En{9}} \\
$E_8^{++}=E_{10}$ & \multicolumn{3}{l}{\En{10}} \\
$A_{n-2}^{++}=AE_n$ & 
\begin{tikzpicture}[baseline]
    \node[dnode,label=below:$1$] (1) at (0,0) {};
    \node[dnode,label=below:$2$] (2) at (1,0) {};
    \node[dnode,label=above:$n-1$] (n-1) at (2,1) {};
    \node[dnode,label=below:$n-3$] (n-3) at (3,0) {};
    \node[dnode,label=below:$n-2$] (n-2) at (4,0) {};
    \node[dnode,label=below:$n$] (n) at (5,0) {};

	\draw[sedge] (2) -- (1) -- (n-1) -- (n-2) -- (n-3) -- (n);
	\draw[sedge,dashed] (2) -- (n-3);
\end{tikzpicture}
\end{longtable}

\appendix

\renewcommand{\labelenumi}{{\rm(\arabic{enumi})}}
\renewcommand{\labelenumii}{{\rm(\alph{enumii})}}

\bigskip

{\section{Cartan--Bott periodicity for the real $E_n$ series \\ (by Max Horn and Ralf K\"ohl)} \label{CartanBott} }

In this appendix we continue the investigation of the generalized spin representations introduced in the main text. We focus on the $E_n$ series and use the original description of the generalized spin representation from \cite{DamourKleinschmidtNicolai}, \cite{deBuylHenneauxPaulot} via Clifford algebras (see Example~\ref{theexample}).
The $E_n$ series is traditionally only defined for $n\in\{6,7,8\}$. However,
using the Bourbaki style labeling shown in Figure~\ref{fig:En}, it naturally
extends to arbitrary $n\geq 3$. Using this description, one has
$E_3=A_2\oplus A_1$, $E_4=A_4$, $E_5=D_5$
(see Figure~\ref{fig:E3-to-E8}).

\begin{figure}[h]
\begin{tikzpicture}
    \draw (-0.5,0.5) node[anchor=east]  {$E_n$};
    \node[dnode,label=below:1] (1) at (0,0) {};
    \node[dnode,label=above:2] (2) at (2,1) {};
    \node[dnode,label=below:3] (3) at (1,0) {};
    \node[dnode,label=below:4] (4) at (2,0) {};
    \node[dnode,label=below:5] (5) at (3,0) {};
    \node[dnode,label=below:6] (6) at (4,0) {};
    \node[dnode,label=below:$n$] (n) at (5,0) {};

    \path (1) edge[sedge] (3)
          (3) edge[sedge] (4)
          (2) edge[sedge] (4)
          (4) edge[sedge] (5)
          (5) edge[sedge] (6)
          (6) edge[sedge,dashed] (n)
          ;
\end{tikzpicture}
\caption{The Dynkin diagram of type $E_n$}
\label{fig:En}
\end{figure}
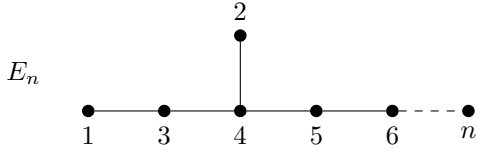

An elementary combinatorial counting argument using binomial coefficients allows us to determine lower bounds for the $\RR$-dimension of the images of the generalized spin representation. These images have to be compact, whence reductive by Theorem~\ref{Mainexistencetheorem} and even semisimple, if the diagram is irreducible. One therefore obtains an upper bound for their $\RR$-dimension via the maximal compact Lie subalgebras of the Clifford algebras. As it turns out, the lower and the upper bounds coincide, providing the following Cartan--Bott periodicity.

\begin{customthm}{A}[Cartan--Bott periodicity of the $E_{n}$ series]\label{thm:A}
Let $n \in \NN$ with $n\geq 4$, let $\mathfrak{k}$ be the maximal compact Lie subalgebra of the split real Kac--Moody Lie algebra of type $E_n$, let $C=C(\RR^n,q)$ be the Clifford algebra with respect to the standard positive definite quadratic form $q$ and let $\rho : \mathfrak{k} \to C$ be the standard generalized spin representation.

Then $\im(\rho)$ is isomorphic to
\begin{enumerate}
\setcounter{enumi}{-1}
\item $\mathfrak{so}(2^{\frac{n}{2}})) \leq \RR\otimes_\RR \M{2^{\frac{n}{2}}}{\RR}$, if $n \equiv 0 \pmod 8$,
\item $\mathfrak{so}(2^{\frac{n-1}{2}}) \oplus \mathfrak{so}(2^{\frac{n-1}{2}})  \leq \left( \RR \oplus \RR \right) \otimes_\RR \M{2^{\frac{n-1}{2}}}{\RR}$, if $n \equiv 1 \pmod 8$,
\item $\mathfrak{so}(2^{\frac{n}{2}}) \leq \M{2}{\RR} \otimes_\RR \M{2^{\frac{n-2}{2}}}{\RR}$, if $n \equiv 2 \pmod 8$,
\item $\mathfrak{su}(2^{\frac{n-1}{2}}) \leq \M{2}{\CC} \otimes_\RR \M{2^{\frac{n-3}{2}}}{\RR}$, if $n \equiv 3 \pmod 8$,
\item $\mathfrak{sp}(2^{\frac{n-2}{2}}) \leq \M{2}{\HH} \otimes_\RR \M{2^{\frac{n-4}{2}}}{\RR}$, if $n \equiv 4 \pmod 8$,
\item $\mathfrak{sp}(2^{\frac{n-3}{2}})\oplus \mathfrak{sp}(2^{\frac{n-3}{2}}) \leq \left( \M{2}{\HH} \oplus \M{2}{\HH} \right) \otimes_\RR \M{2^{\frac{n-5}{2}}}{\RR}$, if $n \equiv 5 \pmod 8$,
\item $\mathfrak{sp}(2^{\frac{n-2}{2}}) \leq \M{4}{\HH} \otimes_\RR \M{2^{\frac{n-6}{2}}}{\RR}$, if $n \equiv 6 \pmod 8$,
\item $\mathfrak{su}(2^{\frac{n-1}{2}}) \leq \M{8}{\CC} \otimes_\RR \M{2^{\frac{n-7}{2}}}{\RR}$, if $n \equiv 7 \pmod 8$,
\end{enumerate}
i.e., $\im(\rho)$ is a semisimple maximal compact Lie subalgebra of $C$.
\end{customthm}

\medskip
Along the way we arrive at a structural explanation for the well-known isomorphism types of the maximal compact Lie subalgebras of the semisimple split real Lie algebras of types 
$E_3=A_2\oplus A_1$, $E_4=A_4$, $E_5=D_5$, $E_6$, $E_7$, $E_8$ (cf., e.g., \cite[p.~518, Table V]{Helgason:1978}).

\begin{customthm}{B} \label{thm:B}
The maximal compact Lie subalgebras of the semisimple split real Lie algebras of types
$A_2\oplus A_1$, $A_4$, $D_5$, $E_6$, $E_7$, $E_8$ are isomorphic to
$\mathfrak{u}(2)$, 
$\mathfrak{sp}(2)\cong\mathfrak{so}(5)$, 
$\mathfrak{sp}(2)\oplus\mathfrak{sp}(2)\cong\mathfrak{so}(5)\oplus\mathfrak{so}(5)$, 
$\mathfrak{sp}(4)$, 
$\mathfrak{su}(8)$, 
$\mathfrak{so}(16)$, 
respectively.
\end{customthm}

\bigskip \noindent
\textbf{Acknowledgements.}
We thank Klaus Metsch for pointing out to us the identity of sums of binomial coefficients in 
Proposition~\ref{binomial} and one of the referees for relaying the elegant version of its proof given in this appendix.
  This research has been partially funded by the EPRSC grant 
EP/H02283X. The second author gratefully acknowledges the hospitality of the IHES at Bures-sur-Yvette and of the Albert Einstein Institute at Golm.

\begin{figure}[h]
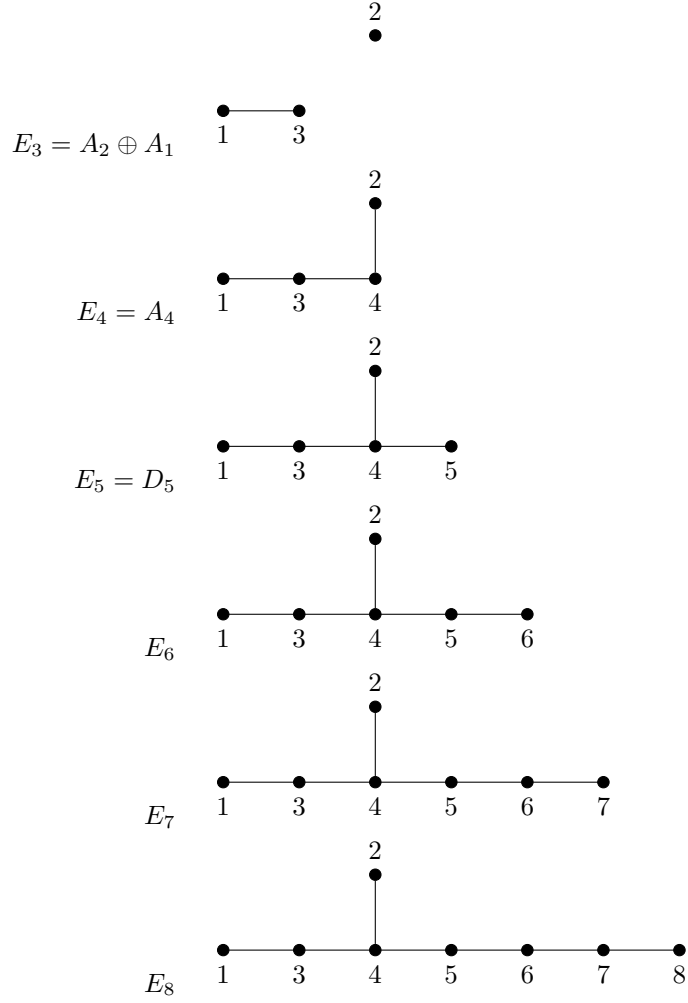

\begin{longtable}{rl}
$E_3=A_2\oplus A_1$ & \En{3} \\ $E_4=A_4$ & \En{4} \\
$E_5=D_5$ & \En{5} \\ $E_6$ & \En{6} \\
$E_7$ & \multicolumn{1}{l}{\En{7}} \\
$E_8$ & \multicolumn{1}{l}{\En{8}} \\
\end{longtable}
\caption{The Dynkin diagrams of types $E_3$ to $E_8$.}
\label{fig:E3-to-E8}
\end{figure}

\subsection{Cartan--Bott periodicity of Clifford algebras} \label{sec:cartan-bott}

Let $\NN = \{1,2,3,\ldots\}$ be the set of natural numbers, and let $\RR$, $\CC$, resp.\ $\HH$ denote the reals, complex numbers resp.\ quaternions. For $n\in\NN$ and a division ring $\mathbb{D}$, denote by $M(n,\mathbb{D})$ the $\mathbb{D}$-algebra of $n\times n$ matrices over $\mathbb{D}$.

Let $V$ be an $\RR$-vector space and $q\colon V \to \RR$ a quadratic form with associated bilinear form $b$. Then the \textbf{Clifford algebra}
$C(V,q)$ is defined as $C(V,q):=T(V)/\langle vw+wv-2b(v,w) \rangle$ where $T(V)$ is the tensor algebra of $V$; cf.\  \cite[Section~4.3]{Kobayashi/Yoshino:2005}, \cite[Chapter 1, \S1]{Lawson/Michelsohn:1989}.

Let $V=\RR^{n}$ with standard basis vectors $v_i$, let $q=x_1^2+\cdots+x_{n}^2$.
Then in $C(V,q)$ we have $v_i^2=1$ and $v_iv_j=-v_jv_i$. 
 
\begin{proposition}[Cartan--Bott periodicity] \label{prop:cartan-bott}
For $n\geq 2$, the Clifford algebra $C(\RR^n,q)$ is isomorphic to the following algebra:
\begin{enumerate}
\setcounter{enumi}{-1}
\item $\RR\otimes_\RR \M{2^{\frac{n}{2}}}{\RR}$, if $n \equiv 0 \pmod 8$,
\item $\left( \RR \oplus \RR \right) \otimes_\RR \M{2^{\frac{n-1}{2}}}{\RR}$, if $n \equiv 1 \pmod 8$,
\item $\M{2}{\RR} \otimes_\RR \M{2^{\frac{n-2}{2}}}{\RR}$, if $n \equiv 2 \pmod 8$,
\item $\M{2}{\CC} \otimes_\RR \M{2^{\frac{n-3}{2}}}{\RR}$, if $n \equiv 3 \pmod 8$,
\item $\M{2}{\HH} \otimes_\RR \M{2^{\frac{n-4}{2}}}{\RR}$, if $n \equiv 4 \pmod 8$,
\item $\left( \M{2}{\HH} \oplus \M{2}{\HH} \right) \otimes_\RR \M{2^{\frac{n-5}{2}}}{\RR}$, if $n \equiv 5 \pmod 8$,
\item $\M{4}{\HH} \otimes_\RR \M{2^{\frac{n-6}{2}}}{\RR}$, if $n \equiv 6 \pmod 8$,
\item $\M{8}{\CC} \otimes_\RR \M{2^{\frac{n-7}{2}}}{\RR}$, if $n \equiv 7 \pmod 8$.
\end{enumerate}
\end{proposition}

\begin{proof}
See e.g.\ \cite[Prop.~4.4.1 + Table~4.4.1]{Kobayashi/Yoshino:2005}.
\end{proof}

Since $C(V,q)$ is an associative algebra, it becomes a Lie algebra by setting $[A,B]:=AB-BA$. 
With this in mind, Proposition~\ref{prop:cartan-bott} implies the following:

\begin{corollary} \label{cor:cartan-bott-max-cpt}
For $n\geq 2$, the maximal semisimple compact Lie subalgebra of the Clifford algebra $C(\RR^n,q)$ is isomorphic to the following Lie algebra:
\begin{enumerate}
\setcounter{enumi}{-1}
\item $\mathfrak{so}(2^{\frac{n}{2}})$, if $n \equiv 0 \pmod 8$,
\item $\mathfrak{so}(2^{\frac{n-1}{2}})\oplus\mathfrak{so}(2^{\frac{n-1}{2}})$, if $n \equiv 1 \pmod 8$,
\item $\mathfrak{so}(2^{\frac{n}{2}})$, if $n \equiv 2 \pmod 8$,
\item $\mathfrak{su}(2^{\frac{n-1}{2}})$, if $n \equiv 3 \pmod 8$,
\item $\mathfrak{sp}(2^{\frac{n-2}{2}})$, if $n \equiv 4 \pmod 8$,
\item $\mathfrak{sp}(2^{\frac{n-3}{2}})\oplus\mathfrak{sp}(2^{\frac{n-3}{2}})$, if $n \equiv 5 \pmod 8$,
\item $\mathfrak{sp}(2^{\frac{n-2}{2}})$, if $n \equiv 6 \pmod 8$,
\item $\mathfrak{su}(2^{\frac{n-1}{2}})$, if $n \equiv 7 \pmod 8$.
\end{enumerate}
\end{corollary}

\subsection{A lower bound on the dimension of a subalgebra}

\begin{definition}
For $n\geq 3$ let $\imp$ be the Lie subalgebra of $C(\RR^n,q)$ generated by $v_1v_2v_3$ and by $v_iv_{i+1}$, $1 \leq i < n$.
\end{definition}

\begin{lemma} \label{lowerbounddimension}
Let $n\geq 3$.
Then $\imp$ contains all products of the form $v_{j_1}v_{j_2}\cdots v_{j_k}$ for $2\leq k\leq n$ and $k \equiv 2, 3 \pmod 4$ with pairwise distinct $j_t \in \{ 1, \ldots, n \}$, with the possible exception of $v_1v_2\cdots v_n$, if $n \equiv 3 \pmod 4$. 
\end{lemma}

\begin{proof}
It is well-known that all products $v_{j_1}v_{j_2}$, $j_1 \neq j_2$, are contained in $\imp$: Indeed, $\Lambda^2 \RR^n \cong \mathfrak{so}(n)$ (cf., e.g., \cite[Prop.~6.1]{Lawson/Michelsohn:1989}) is generated as a Lie algebra by the $v_iv_{i+1}$, $1 \leq i < n$ (cf., e.g., \cite[Thm.~1.31]{Berman} and Theorem~\ref{Berman} of the main text).

Moreover, for pairwise distinct $j_t$, $1 \leq t \leq k+1$, one has 
\[[v_{j_1}v_{j_2},\;v_{j_2}v_{j_3}\cdots v_{j_{k+1}}] = 2v_{j_1}v_{j_3}\cdots v_{j_{k+1}}.\]
Since re-ordering of the factors simply yields scalar multiples, this shows inductively that, as long as $k+1 \leq n$, once an arbitrary factor of the form $v_{j_1}v_{j_2}\cdots v_{j_k}$ is contained in the Lie subalgebra, all factors of that form are contained in the Lie subalgebra. This statement is also true in the situation $k=n$, because in that case all factors of that form are scalar multiples of one another.   

We prove the claim of the lemma by induction over $k$. For $k=2$ and $k=3$, this is obvious.
Suppose the claim holds for $k\equiv 3\pmod 4$, so that the next value for $k$ to consider
is $k+3\equiv 2\pmod 4$. By induction hypothesis
$v_4v_5\cdots v_{k+3} \in \imp$ and
\[0 \neq [v_1v_2v_3,v_4v_5\cdots v_{k+3}] = 2 v_1v_2v_3v_4\cdots v_{k+3}. \]
If on the other hand the claim holds for $k\equiv 2\pmod 4$, then the next value for $k$ to consider
is $k+1\equiv 3\pmod 4$. If $k+2\leq n$, then by
induction hypothesis $v_3v_4\cdots v_{k+2}\in\imp$ and
\[0 \neq [v_1v_2v_3,v_3v_4\cdots v_{k+2}] = 2v_1v_2v_4\cdots v_{k+2}.\]
That is, the presence of all elements of the form $v_{j_1}v_{j_2}v_{j_3}$ with pairwise distinct $j_t \in \{ 1, \ldots, n \}$ inductively allows us to construct all elements of the form $v_{j_1}v_{j_2}\cdots v_{j_k}$ for $k \equiv 2, 3 \pmod 4$ with pairwise distinct $j_t \in \{ 1, \ldots, n \}$ for all $k \leq n$, with the possible exception of the situation $k=n \equiv 3 \pmod 4$, as the element $v_{k+2}$ does not exist in that case.
\end{proof}

\begin{remark}
It will turn out later, as a consequence of the proof of Theorem~\ref{thm:A} based on dimension arguments, that the above elements in fact generate $\imp$ as an $\RR$-vector space and that for $n \equiv 3 \pmod 4$ the element $v_1v_2\cdots v_n$ indeed is not contained in $\imp$, unless of course $n=3$.
\end{remark}

\begin{definition}
For $k\in\{0,1,2,3\}$, let
\[ \delta_k : \NN\to \NN : n \mapsto \sum_{\substack{i=0, \\ i \equiv k \pmod 4}}^n \binom{n}{i}. \]
\end{definition}


\begin{consequence} \label{con:lowerbounddim-ineffective}
Let $n\geq 3$.
Then
\[
\dim \imp \geq
\begin{cases}
\delta_2(n) + \delta_3(n) & \text{ if }  n \not\equiv 3 \pmod 4 , \\
\delta_2(n) + \delta_3(n)-1 & \text{ if }  n \equiv 3 \pmod 4 .
\end{cases}
\]
\end{consequence}

\subsection{Combinatorics of binomial coefficients}

We now turn the lower bound from Consequence~\ref{con:lowerbounddim-ineffective} into a numerically explicit bound by deriving a closed formula in $n$ for the functions $\delta_k$.

\begin{proposition} \label{binomial}
Let $n \in \NN$ and $k\in\{0,1,2,3\}$.
\begin{enumerate}
\setcounter{enumi}{-1}
\item If $n \equiv 0 \pmod 4$, then
\[ \delta_k(n)
= \begin{cases}
  2^{n-2}  & \text{for } k \in \{ 1, 3 \}, \\
  2^{n-2}+(-1)^{\frac{n}{4}+ \frac{k}{2}} 2^{\frac{n}{2}-1}
           & \text{for }k \in \{ 0, 2 \}.
\end{cases}\]
\item If $n \equiv 1 \pmod 4$, then
\[ \delta_k(n)
= \begin{cases} 
  2^{n-2}+(-1)^{\frac{n-1}{4}}2^{\frac{n-3}{2}} & \text{for } k \in \{ 0, 1 \}, \\
  2^{n-2}-(-1)^{\frac{n-1}{4}}2^{\frac{n-3}{2}} & \text{for } k \in \{ 2, 3 \}.
  \end{cases}
\]
\item If $n \equiv 2 \pmod 4$, then
\[ \delta_k(n)
= \begin{cases} 
  2^{n-2} & \text{for } k \in \{ 0, 2 \}, \\
  2^{n-2}+(-1)^{\frac{n-2}{4}+ \frac{k-1}{2}} 2^{\frac{n}{2}-1}
          & \text{for } k \in \{ 1, 3 \}.
  \end{cases}
\]
\item If $n \equiv 3 \pmod 4$, then
\[ \delta_k(n)
= \begin{cases}         
  2^{n-2}-(-1)^{\frac{n-3}{4}}2^{\frac{n-3}{2}} & \text{for } k \in \{ 0, 3 \}, \\
  2^{n-2}+(-1)^{\frac{n-3}{4}}2^{\frac{n-3}{2}} & \text{for } k \in \{ 1, 2 \}.
  \end{cases}
\]
\end{enumerate}
\end{proposition}

\begin{proof}
For $a, n \in \mathbb{N}$ the binomial theorem implies $$(1+i^a)^n = \sum_{k=0}^3 i^{ak}\delta_k(n),$$ where $i \in \mathbb{C}$ denotes the imaginary unit. Evaluation of this formula for $a \in \{ 0, 1, 2, 3 \}$ yields the following system of four identities:
\begin{align}
\delta_0(n) + \delta_1(n) + \delta_2(n) + \delta_3(n) &= 2^n, \label{1}\\
\delta_0(n) + i\delta_1(n) - \delta_2(n) -i \delta_3(n) &= (1+i)^n = 2^{\frac{n}{2}} \cdot e^{\frac{n2\pi i}{8}}, \label{2}\\
\delta_0(n) - \delta_1(n) + \delta_2(n) - \delta_3(n) &=  0, \label{3}\\
\delta_0(n) -i \delta_1(n) - \delta_2(n) +i \delta_3(n) &=  (1-i)^n = 2^{\frac{n}{2}} \cdot e^{-\frac{n2\pi i}{8}}. \label{4}
\end{align}
These four identities imply
\begin{align}
\delta_0(n) + \delta_2(n) &= 2^{n-1}   &\text{(\ref{1}) plus (\ref{3}) divided by $2$}, \label{5} \\ 
\delta_0(n) - \delta_2(n) &= 2^{\frac{n-2}{2}} (e^{\frac{n2\pi i}{8}}+e^{-\frac{n2\pi i}{8}}) &\text{(\ref{2}) plus (\ref{4}) divided by $2$}, \label{6} \\ 
\delta_1(n) + \delta_3(n) &=  2^{n-1}  &\text{(\ref{1}) minus (\ref{3}) divided by $2$}, \label{7} \\ 
\delta_1(n) - \delta_3(n) &= -2^{\frac{n-2}{2}}i (e^{\frac{n2\pi i}{8}}-e^{-\frac{n2\pi i}{8}}) &\text{(\ref{2}) minus (\ref{4}) divided by $2i$}.  \label{8}
\end{align}
One readily computes $\delta_0(n)$, $\delta_2(n)$ from (\ref{5}), (\ref{6}) and $\delta_1(n)$, $\delta_3(n)$ from (\ref{7}), (\ref{8}).
\end{proof}

Combining this with Consequence~\ref{con:lowerbounddim-ineffective} yields
the following:

\begin{consequence} \label{comparedimension}
Let $n \in \NN$ and $n\geq 2$.
\begin{enumerate}
\setcounter{enumi}{-1}
\item If $n \equiv 0 \pmod 8$, then
\begin{align*}
  \dim \imp
  \geq \delta_2(n) + \delta_3(n)
&= 2^{n-2} - 2^{\frac{n}{2}-1} + 2^{n-2}
= 2^{\frac{n-2}{2}}(2^{\frac{n}{2}}-1) \\
&= \dim_\RR(\mathfrak{so}(2^{\frac{n}{2}})).
\end{align*}
\item If $n \equiv 1 \pmod 8$, then
\begin{align*}
  \dim \imp
  \geq \delta_2(n) + \delta_3(n)
&= 2\left( 2^{n-2} - 2^{\frac{n-3}{2}} \right)
= 2^{\frac{n-1}{2}}(2^{\frac{n-1}{2}}-1) \\
&= \dim_\RR(\mathfrak{so}(2^{\frac{n-1}{2}}) \oplus \mathfrak{so}(2^{\frac{n-1}{2}})).
\end{align*}
\item If $n \equiv 2 \pmod 8$, then
\begin{align*}
  \dim \imp
  \geq \delta_2(n) + \delta_3(n)
&= 2^{n-2} + 2^{n-2} - 2^{\frac{n}{2}-1}
= 2^{\frac{n-2}{2}}(2^{\frac{n}{2}}-1) \\
&= \dim_\RR(\mathfrak{so}(2^{\frac{n}{2}})).
\end{align*}
\item If $n \equiv 3 \pmod 8$, then
\begin{align*}
  \dim \imp + 1
  \geq \delta_2(n) + \delta_3(n)
&= 2^{n-2} + 2^{\frac{n-3}{2}} + 2^{n-2} - 2^{\frac{n-3}{2}}
= 2^{n-1} \\
&= \dim_\RR(\mathfrak{su}(2^{\frac{n-1}{2}}))+1.
\end{align*}
\item If $n \equiv 4 \pmod 8$, then
\begin{align*}
  \dim \imp
  \geq \delta_2(n) + \delta_3(n)
&= 2^{n-2} + 2^{\frac{n}{2}-1} + 2^{n-2}
= 2^{\frac{n-2}{2}}(2^{\frac{n}{2}}+1) \\
&= \dim_\RR(\mathfrak{sp}(2^{\frac{n-2}{2}})).
\end{align*}
\item If $n \equiv 5 \pmod 8$, then
\begin{align*}
  \dim \imp
  \geq \delta_2(n) + \delta_3(n)
&= 2\left( 2^{n-2} + 2^{\frac{n-3}{2}} \right)
= 2^{\frac{n-1}{2}}(2^{\frac{n-1}{2}}+1) \\
&= \dim_\RR(\mathfrak{sp}(2^{\frac{n-3}{2}})\oplus \mathfrak{sp}(2^{\frac{n-3}{2}})).
\end{align*}
\item If $n \equiv 6 \pmod 8$, then
\begin{align*}
  \dim \imp
  \geq \delta_2(n) + \delta_3(n)
&= 2^{n-2} + 2^{n-2} + 2^{\frac{n}{2}-1}
= 2^{\frac{n-2}{2}}(2^{\frac{n}{2}}+1) \\
&= \dim_\RR(\mathfrak{sp}(2^{\frac{n-2}{2}})).
\end{align*}
\item If $n \equiv 7 \pmod 8$, then
\begin{align*}
  \dim \imp + 1
  \geq \delta_2(n) + \delta_3(n)
&= 2^{n-2} - 2^{\frac{n-3}{2}} + 2^{n-2} + 2^{\frac{n-3}{2}}
= 2^{n-1} \\
&= \dim_\RR(\mathfrak{su}(2^{\frac{n-1}{2}}))+1.
\end{align*}
\end{enumerate}
\end{consequence}

\subsection{Generalized spin representations of the split real $E_n$ series and the resulting quotients}

The example of a generalized spin representation of the maximal compact subalgebra of the split real Kac--Moody Lie algebra of type $E_{10}$ described in \cite{DamourKleinschmidtNicolai} and \cite{deBuylHenneauxPaulot} (see Example~\ref{theexample} in the main text) generalizes directly to the whole $E_n$ series as follows.

Let $n \in \NN$, let $\mathfrak{g}$ be the split real Kac--Moody Lie algebra of type $E_n$, let $\mathfrak{k}$ be its maximal compact subalgebra, and let $X_i$, $1 \leq i \leq n$, be the Berman generators of $\mathfrak{k}$ (cf.\ \cite[Thm.~1.31]{Berman} and Theorem~\ref{Berman} in the main text) enumerated in Bourbaki style as shown in  Figure~\ref{fig:En}, i.e., $X_1$, $X_3$, $X_4$, \ldots, $X_n$ belong to the $A_{n-1}$ subdiagram, generating $\mathfrak{so}(n)$, and $X_2$ to the additional node. As in Section~\ref{sec:cartan-bott} let $q$ be the standard positive definite quadratic form on $\RR^n$ and let $C = C(\RR^n,q)$ be the corresponding Clifford algebra, considered as a Lie algebra.

\begin{proposition}
Let $n\geq 3$. The assignment
\begin{itemize}
\item $X_{1} \mapsto \frac{1}{2}v_1v_2$,
\item $X_2 \mapsto \frac{1}{2}v_1v_2v_3$,
\item $X_j \mapsto \frac{1}{2}v_{j-1}v_j$ for $3 \leq j \leq n$
\end{itemize}
defines a Lie algebra homomorphism $\rho$ from $\mathfrak{k}$ to the Lie subalgebra $\imp$ of $C$ generated by $v_1v_2v_3$ and by $v_iv_{i+1}$, $1 \leq i < n$, called the {\bf standard generalized spin representation of $\mathfrak{k}$}.
\end{proposition} 

\begin{proof}
The proof is based on the criterion established in Remark~\ref{characterization} and is exactly the same as in the $E_{10}$ case discussed in Example~\ref{theexample}.  
\end{proof}

\begin{proof}[Proof of Theorem~\ref{thm:A}]
By Theorem~\ref{Mainexistencetheorem} and since
$E_n$ is simply laced and connected for $n\geq 4$, the image $\imp$ of $\rho$ is semisimple and compact. By Lemma~\ref{lowerbounddimension} and Consequence~\ref{comparedimension}, the dimension $\dim_\RR(\imp)$ is at least as large as the dimension of the maximal semisimple compact Lie subalgebra of $C$ as given in Corollary~\ref{cor:cartan-bott-max-cpt}. The claim follows. 
\end{proof}

\begin{proof}[Proof of Theorem~\ref{thm:B}]
Let $\mathfrak{g}$ be a semisimple split real Lie algebra of type $E_4=A_4$, $E_5=D_5$, $E_6$, $E_7$ or $E_8$ and $\mathfrak{g} = \mathfrak{k} \oplus \mathfrak{a} \oplus \mathfrak{n}$ its Iwasawa decomposition. Since $\dim_\mathbb{R}(\mathfrak{k}) = \dim_\mathbb{R}(\mathfrak{n})$, from the combinatorics of the respective root system we conclude that the maximal compact Lie subalgebra $\mathfrak{k}$ has dimension
\begin{align*}
10 &= \frac{4\cdot 5}{2}
    = \frac{2^{\frac{4}{2}} \cdot (2^{\frac{4}{2}}+1)}{2}
    = \dim_\RR(\mathfrak{sp}(2))
    = \dim_\RR(\mathfrak{so}(5))
	&\text{ if } n=4, \\
20 &= 2 \cdot 10
    = \dim_\RR(\mathfrak{sp}(2)\oplus\mathfrak{sp}(2))
    = \dim_\RR(\mathfrak{so}(5)\oplus\mathfrak{so}(5))
	&\text{ if } n=5, \\
36 &= 4 \cdot 9
    = 2^{\frac{6-2}{2}}\cdot(2^{\frac{6}{2}}+1)
    = \dim_\RR(\mathfrak{sp}(4)) 
	&\text{ if } n=6, \\
63 &= 2^6-1
    = \dim_\RR(\mathfrak{su}(8)) 
	&\text{ if } n=7, \\
120 &= \frac{16 \cdot 15}{2}
    = \frac{2^{\frac{8}{2}} \cdot (2^{\frac{8}{2}}-1)}{2}
    = \dim_\RR(\mathfrak{so}(16)) 
	&\text{ if } n=8.
\end{align*}
For $n\geq 4$ we may now apply Theorem~\ref{thm:A} and deduce that the
standard generalized spin representation $\rho$ has to be injective
in these cases.

This leaves the case $E_3=A_2\oplus A_1$. Since this diagram is not irreducible,
Theorem~\ref{Mainexistencetheorem}
only implies that $\im(\rho)=\imp$ is compact but not that it is
semisimple (and, indeed, it is not). However, $n=3$ is also 
an exceptional case for Lemma~\ref{lowerbounddimension}: In this case $\dim_\RR(\imp)=4$, as $v_1v_2$, $v_1v_3$, $v_2v_3$, $v_1v_2v_3$ form an $\RR$-basis of $\imp$. 
On the other hand, the Clifford algebra $C$ is isomorphic to $\M{2}{\CC}$,
hence $\kk\cong \mathfrak{u}(2)$, and this has dimension $4$.
Thus $\rho$ is also injective when $n=3$. The claim follows.
\end{proof}


\noindent 
JLU Giessen, Mathematisches Institut, Arndtstrasse 2, 35392 Giessen, Germany \\
{\tt ralf.koehl@math.uni-giessen.de } \\
{\tt max.horn@math.uni-giessen.de }

\noindent
Mathematics and Statistics, Fylde College, Lancaster University, Lancaster LA1 4YF, United Kingdom \\
{\tt p.d.levy@lancaster.ac.uk}

\end{document}